\newtheorem{Theorem}{Theorem}[section]
\newtheorem{Corollary}[Theorem]{Corollary}
\newtheorem{Lemma}[Theorem]{Lemma}
\newtheorem{Proposition}[Theorem]{Proposition}
\newtheorem{Question}[Theorem]{Question}
\newtheorem{Definition}[Theorem]{Definition}
\newtheorem{Example}[Theorem]{Example}
\newtheorem{Remark}{{Remark}}
\def\qed{\hfill $\Box$}
\begin{document}
\baselineskip 17pt

\title{Weakly nilpotent hypergroups \thanks{Research was supported by the NNSF  of China (12371025, 12371102).}}

\author{Chi Zhang, Jun Liu\thanks{Corresponding author}, Dengyin Wang\\
{\small  School of Mathematics, JCAM, China University of Mining and
Technology}\\
{\small Xuzhou, 221116, P. R. China}\\
{\small E-mails: zclqq32@cumt.edu.cn; junliu@cumt.edu.cn; wdengyin@126.com}\\}

\date{}
\maketitle

\begin{abstract}

In this paper, we introduce the weakly nilpotent hypergroups with giving
some new properties, and then establish several structural characterizations of these hypergroups.
Some results obtained in this paper answer the two questions raised by the first author and W. Guo in \cite{zhang1}.
\end{abstract}

\let\thefootnoteorig\thefootnote
\renewcommand{\thefootnote}{\empty}

\footnotetext{Keywords: nilpotent hypergroup; weakly nilpotent hypergroup;  subnormal closed subset; center series}

\footnotetext{Mathematics Subject Classification (2020): 20N20, 20D15, 20D35, 16D10} \let\thefootnote\thefootnoteorig

\section{Introduction}
F. Marty \cite{m1} introduced the notion of hypergroups which generalizes the definition of groups.
The following definitions regarding hypergroups are due to P.H.-Zieschang and various co-authors \cite{b1,fz1,vz1}.

\begin{Definition}

A hypergroup is a set $H$ equipped with a hypermultiplication (a map from $H$ to the power set of $H$, denoted as $(p, q)  \longmapsto pq$ for all $p, q \in H$).
For any subsets $P, Q\subseteq H$,
$$PQ := \bigcup_{p \in P, q \in Q} pq.$$
If $P = \{ p \}$, a singleton set, then $pQ := \{ p \}Q$, and $Qp := Q \{ p \}$.
The hypermultiplication is assumed to satisfy the following conditions:

$(H1)$  For any elements $p, q$, and $r$ in $H$, $p(qr) = (pq)r$.

$(H2)$ $H$ contains an element $1$ such that $s1 = \{ s \}$ for all $s \in H$.

$(H3)$ For each element $s$ in $H$, there exists an element $s^*$ in $H$ such that for any elements $p, q$, and $r$ in $H$ with $r \in pq$, then we have $q \in p^*r$ and $p \in rq^*$.

\end{Definition}

$(H1)$ implies that set product is associative. $(H2)$ and $(H3)$ yield that for all $s \in H$, $1 \in s^*s$.
An element $s \in H$ is called {\em thin} if $s^*s = \{ 1 \}$.

The set of all the thin elements of a hypergroup $H$ is denoted $O_{\vartheta}(H)$.
A hypergroup $H$ is called thin if $H = O_{\vartheta}(H)$, that is, all elements of $G$ are thin.
Of course, any group $H$ may be regarded as a thin hypergroup, simply by replacing the product of two elements with the singleton set containing that product.
Conversely, it is easy to see that a thin hypergroup is a group.
In fact, the concept of hypergroups is a generalization of groups.

Later, a number of group theoretic results have found a generalization within the theory of hypergroups (see \cite{b1, fz1, fz2, tz1, vz1, z2, z4, zhang1}).
We need to specifically mention here that P.-H. Zieschang has published a monograph \cite{z3} on hypergroups.
In 2022, A. V. Vasil'ev and P.-H. Zieschang \cite{vz1} proposed the definition of solvable hypergroups: A hypergroup $H$ is said to be solvable if it contains closed subsets $F_{0}, \cdots, F_n$ such
that $F_0 = {1}$, $F_n = H$, and, for each element $i$ in $\{0, \cdots, n\}$ with $1 \leq i$, $F_{i-1} \leq F_i$,
$F_i//F_{i-1}$ is thin, and $|F_i//F_{i-1}|$ is a prime number.
This generalized the theory of finite solvable groups to solvable hypergroups.
H. Blau \cite{b2} give the definition of $\pi$-separable hypergroups,
and further  generalized the results of   A. V. Vasil'ev and P.-H. Zieschang on residually thin and $\pi$-valenced hypergroups.
Recently, C. Zhang and W. Guo \cite{zhang1} proposed the definition of nilpotent hypergroups: A hypergroup $H$ is said to be nilpotent if $H_{n} = \underbrace{[H, H, \cdots, H]}_{n} = 1$ for some positive integer $n$, where $H_{n} = [H_{n-1}, H]$, $H_{1} = H$.
In this paper, we should give the definition of weakly nilpotent hypergroups.
And we should study some properties of weakly nilpotent hypergroups and provided some structural characterizations of weakly nilpotent hypergroups.
Then we will answer two questions in \cite{zhang1}.

Following \cite{fz1, vz1},
for any subset $S$ of a hypergroup $H$, let $S^* := \{ s^* | s \in S \}$;
A nonempty subset $F$ of $H$ is called {\em closed} if $F^*F \subseteq F$, that is, $a^*b \subseteq F$ for all $a, b \in F$.
Clearly, the intersection of some closed subsets of $H$ is also a closed subset of $H$.
A closed subset $F$ of $H$ is called {\em normal} ({\em strongly normal}) in $H$ if $Fh \subseteq hF$ ($h^*Fh \subseteq F$) for each element $h$ in $H$;
It is easy to see that strong normality implies normality.
A closed subset $F$ is {\em subnormal} ({\em strongly subnormal}) in $H$ if
there exists a chain of closed subsets $F = F_0 \subseteq F_1 \subseteq \cdots \subseteq F_n = H$ for some $n > 0$ such that $F_{i-1}$ is normal (strongly normal) in $F_i$ for all $1 \leq i \leq n$.
The intersection of all closed subsets of $H$ which are strongly normal in $H$ will be denoted by $O^{\vartheta}(H)$.
The subset $O^{\vartheta}(H)$ of $H$ is called the {\em thin residue} of $H$.
Lemma $3.6(ii)$ in \cite{fz1} implies that the thin residue of $H$ is a strongly normal closed subset of $H$.
Let $F$ be a closed subset of a hypergroup $H$.
For any $h \in H$, define $h^F := FhF$; and for any subset $F \subseteq S \subseteq H$, define $S//F := \{ h^F | h \in S \}$.
Then for all $a, b \in H$, $a^F \cdot b^F := \{x^F | x \in aFb \}$
defines a hypermultiplication on $H//F$ such that $H//F$ becomes a hypergroup, called the quotient of $H$ over $F$.

Following \cite[p.100]{fz1}, a subset $A$ of $H$ is said to be a {\em star invariant} subset of $H$ if $A^{*} = A.$
For each subset $A$ of $H$,  $\langle A \rangle$ denote to be the intersection of all closed subsets of
$H$ containing $A$.

Following \cite{b2}, a hypergroup $H$ is called residually thin (or in short $RT$)
if there exists a chain of closed subsets
$\{1\} = F_0 \subset F_1 \subset \ldots \subset F_n = H$ such that $F_{i}//F_{i-1}$ is thin for all $1 \leq i \leq n$
and the valency of a finite $RT$ hypergroup $H$ is the integer
$$n_{H} = \prod_{i=1}^{n} |F_{i}//F_{i-1}|.$$
A closed subset $C$ of a residually thin hypergroup is called a $p$-subset if $n_C$ is a $p$-number;
A Sylow $p$-subset of $H$ is a $p$-subset $C$ such that $n_{H}/n_{C}$ is a $p'$-number.
An element $h \in H$ is called {\em $p$-valenced} if $n_{{h^*}^{U}h^{U}} = |{h^*}^{U}h^{U}|$ is a $p$-number
for every subnormal closed subset $U$ such that $n_U$ is a $p$-number and ${h^*}^{U}h^{U} \in O_{\vartheta}(H//U)$.
If all $h \in H$ are $p$-valenced, then $H$ is siad to be {\em $p$-valenced}.

Let $Z(H) = \{ h \in H | hx=xh, \forall x \in H, h^*h=1 \}$ and we call that $Z(H)$ is the centre of $H$.
Note that $Z(H)$ is a normal closed subset of $H$ (See Lemma \ref{normal} below).

\begin{Definition}\label{hyper}

Let $H$ be a hypergroup.
We call that a  subset series of $H$: $$1 = Z_{0}(H) \subseteq Z_{1}(H) \subseteq \cdots \subseteq Z_{n}(H) \subseteq \cdots$$  the upper center series of $H$
if $Z_{i}(H)//Z_{i-1}(H) = Z(H//Z_{i-1}(H))$ for all $i = 1, 2, \cdots$.

\end{Definition}

\begin{Remark}
$(1)$ $Z_{n}(H)$ is a normal closed subset of $H$ for all positive integer $n$ (See Lemma \ref{normal} below).

$(2)$ When $Z_{n}(H) = Z_{n+1}(H) = \cdots$, then we denote by $Z_{\infty}(H)$ the terminal term of this ascending series and say that $Z_{\infty}(H)$ is the hypercenter of $H$.

\end{Remark}

\begin{Definition}\label{nilpotent}

We call that a hypergroup $H$ is weakly nilpotent if $Z_{n}(H) = H$ for some non-negative integer $n$.

\end{Definition}

\begin{Remark}

$(1)$ $Z_{\infty}(H)$ is weakly nilpotent for every hypergroup $H$.

$(2)$ Nilpotent groups are thin weakly nilpotent hypergroups. Of course, thin weakly nilpotent hypergroups are nilpotent groups as well.

\end{Remark}

A hypergroup $H$ is called a {\em commutative hypergroup} if for every $a, b \in H$, $ab=ba$.

\begin{Example}\label{E1}(See \cite[Example 2.12]{jun})

Let $K: = \{0, 1\}$ be a two point set.
One imposes a commutative hyperoperation $+$ as follows:
$0 + 0 = 0, 1 + 0 = 1, 1 + 1 = \{0, 1\}.$

\end{Example}

Observe that $K$ is a commutative hypergroup.
However $K$ is not a weakly nilpotent hypergroup.
In fact, $K$ is not $RT$ and all weakly nilpotent hypergroups are $RT$.

Next we combine the relationship between the definitions of nilpotent hypergroups from \cite{zhang1}
and weakly nilpotent hypergroups introduced in present paper.
If $H$ is nilpotent (the definition in \cite{zhang1}), then there exists some positive integer $n$, where $H_{n} = [H_{n-1}, H]$, $H_{1} = H$.
From Lemma \ref{thin residue}(1) below,  $O^{\vartheta}(H) = [H, 1] = [1, H] \leq [H_{n-1}, H]=H_{n} =1$.
It implies from Lemma \ref{thin residue}(2) below that $1$ is a strongly normal closed subset of $H$.
By Lemma \ref{strong} below, we have $H$ is thin.
Hence $H$ is a nilpotent group.
Of course, $H$ is a weakly nilpotent hypergroup.
Conversely, it is not in general.

\begin{Example}\label{E2}

Let $H: = \{0, 1, 2\}$ be a three point set.
One imposes a commutative hyperoperation $+$ as follows:
\[
\begin{array}{c|ccc}
+ & 0 & 1 & 2 \\
\hline
0 & 0 & 1 & 2 \\
1 & 1 & 0 & 2 \\
2 & 2 & 2 & \{0,1\}.
\end{array}
\]

\end{Example}

We also get some characterization of finite weakly nilpotent hypergroups.

\begin{Theorem}\label{sub}

Closed subsets of weakly nilpotent hypergroups are weakly nilpotent.

\end{Theorem}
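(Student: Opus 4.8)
The plan is to take a closed subset $K \le H$ where $H$ is weakly nilpotent with $Z_n(H) = H$, and to show $K$ is weakly nilpotent by tracking how the upper center series of $H$ interacts with $K$. The natural candidate is to prove, by induction on $i$, that $K \cap Z_i(H) \subseteq Z_i(K)$; once this is established, setting $i = n$ gives $K = K \cap Z_n(H) \subseteq Z_n(K) \subseteq K$, hence $Z_n(K) = K$ and $K$ is weakly nilpotent.

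First I would handle the base case $i = 0$, which is trivial since $Z_0(H) = 1 = Z_0(K)$. For the inductive step, I would assume $K \cap Z_{i-1}(H) \subseteq Z_{i-1}(K)$ and try to push elements of $K \cap Z_i(H)$ into $Z_i(K)$. Here the key structural fact to invoke is the definition of the upper center series: $Z_i(H)//Z_{i-1}(H) = Z(H//Z_{i-1}(H))$. So if $h \in K \cap Z_i(H)$, then the image of $h$ in $H//Z_{i-1}(H)$ is central and thin there; I would want to transfer this to the statement that the image of $h$ in $K//(K \cap Z_{i-1}(H))$ is central and thin. The centrality ($hx = xh$ type relations modulo $K \cap Z_{i-1}(H)$) and the thinness condition ($h^*h \subseteq K \cap Z_{i-1}(H)$) should both follow by restricting the corresponding relations in $H//Z_{i-1}(H)$ and intersecting with $K$, using that $K$ is closed so that the relevant products $a^*b$ stay inside $K$. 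Combined with the inductive hypothesis $K \cap Z_{i-1}(H) \subseteq Z_{i-1}(K)$, a short quotient argument (passing from being central mod $K \cap Z_{i-1}(H)$ to being central mod $Z_{i-1}(K)$) then places $h$ into $Z_i(K)$.

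I expect the main obstacle to be the precise bookkeeping in the quotient hypergroups, specifically verifying that the natural map $K//(K \cap Z_{i-1}(H)) \to H//Z_{i-1}(H)$ behaves well enough (is an injective homomorphism of hypergroups onto a closed subset) so that centrality and thinness of the image of $h$ in the larger quotient really do descend to the smaller one. This requires care because hypergroup quotients are more delicate than group quotients: one must check that $Z_{i-1}(H)$ being normal (Remark after Definition \ref{hyper}, part (1)) is enough to make $K \cap Z_{i-1}(H)$ normal in $K$ and to make the quotient map well-defined, and that the centralizing relations really do transfer. Once the isomorphism of $K//(K\cap Z_{i-1}(H))$ with a closed subset of $H//Z_{i-1}(H)$ is in hand, the rest is a routine induction; I would also lean on the already-cited Lemma \ref{normal} to keep the $Z_i$'s normal throughout.
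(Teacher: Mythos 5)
Your proposal is correct and follows essentially the same route as the paper: the paper intersects a central series of $H$ with the closed subset $N$, uses the identification $N//(N\cap H_{i+1})\cong H_{i+1}N//H_{i+1}\subseteq H//H_{i+1}$ to transfer centrality, and then concludes via Lemma \ref{central}. Your induction showing $K\cap Z_i(H)\subseteq Z_i(K)$ simply folds that last comparison step (the content of Lemma \ref{central}) into the inductive argument, while relying on the same second-isomorphism-type embedding and the same descent of centrality and thinness.
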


\begin{Theorem}\label{qu}

Let $H$ be a weakly nilpotent hypergroup, and let $T$ be a closed subset of $H$.
Then $H//T$ is a weakly nilpotent hypergroup.

\end{Theorem}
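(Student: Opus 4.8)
The plan is to show, by induction on the nilpotency length $n$ with $Z_n(H) = H$, that the image of $H$ in $H//T$ has bounded-by-$n$ upper center series, i.e.\ $Z_n(H//T) = H//T$. The crucial point to isolate first is a compatibility lemma: if $T$ is a closed subset of $H$ and $A$ is a normal closed subset of $H$ containing $T$, then $A//T$ is a closed subset of $H//T$, and moreover the natural quotient identification $(H//T)//(A//T) \cong H//A$ holds as hypergroups. This ``third isomorphism'' type statement should already be available in \cite{fz1} (it is the hypergroup analogue of $(H/T)/(A/T)\cong H/A$); I would cite it rather than reprove it. Granting this, the engine of the induction is the claim that the image $Z_1(H)//T \cdot (T//T)$ — more precisely, the canonical image of $Z(H)$ in $H//T$ — lies inside $Z(H//T)$. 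This is where I expect the main obstacle.

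For that central step, take $h \in Z(H)$, so $h^*h = \{1\}$ and $hx = xh$ for all $x \in H$. One must check that $h^T = ThT$ satisfies $(h^T)^* h^T = \{1^T\}$ in $H//T$ and commutes with every $x^T$. Commutativity is the easy half: $h^T x^T = \{z^T \mid z \in hTx\}$ and $x^T h^T = \{z^T \mid z \in xTh\}$, and since $h$ is thin and central it should ``pass through'' $T$ — one needs $hTx$ and $xTh$ to determine the same set of $T$-cosets, which follows from $ht = th$-type manipulations together with $h$ being thin (so that $Th = hT$ as sets, since thin central elements normalize everything). The genuinely delicate half is showing $h^T$ is thin in $H//T$, i.e.\ $n_{(h^T)^* h^T}=1$; here the thinness of $h$ in $H$ gives $h^*Th \subseteq$ something small, and one must argue $Th^*hT = Th^*T \cdot$ collapses to $T$, i.e.\ $h^T$ has valency one in the quotient. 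I would handle this by combining $h^*h=1$ with the fact (used implicitly already via Lemma \ref{normal}) that $Z(H)$ is normal, so $h^*Th \subseteq TZ(H)$ stays controlled; the finite/$RT$ remark about weakly nilpotent hypergroups being $RT$ may be invoked if a valency-counting argument is cleaner than a direct set computation.

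With the central step in hand the induction closes routinely. Suppose the theorem holds for all weakly nilpotent hypergroups of length $< n$, and let $Z_n(H) = H$. Set $\overline{H} = H//T$. By the central step, the image $\overline{Z} := Z_1(H)$'s canonical copy in $\overline H$ satisfies $\overline Z \subseteq Z(\overline H) = Z_1(\overline H)$. Now $H//Z_1(H)$ is weakly nilpotent of length $\le n-1$ by Definition \ref{nilpotent}, and its quotient by the image of $T$ is $\overline H // Z_1(\overline H)$-ish via the third-isomorphism lemma; applying the inductive hypothesis to $H//Z_1(H)$ and the closed subset $Z_1(H)T//Z_1(H)$ gives that $(H//Z_1(H))//(\text{that subset})$ is weakly nilpotent of length $\le n-1$, and rewriting this quotient as $\overline H // \overline Z$ and then using $\overline Z \subseteq Z_1(\overline H)$ yields $Z_n(\overline H) = \overline H$. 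Hence $\overline H = H//T$ is weakly nilpotent, completing the proof. The one piece of genuine care is making sure all the ``canonical images'' and ``$Z_1(H)T$'' bookkeeping is consistent — but that is routine once the third-isomorphism lemma and the ``image of the center sits in the center'' lemma are nailed down.

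\qed
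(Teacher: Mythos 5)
Your proposal is correct and follows essentially the same route as the paper: the key lemma is that the canonical image of $Z(H)$ in $H//T$ lands in $Z(H//T)$ (the paper's Lemma \ref{element}), combined with the third-isomorphism-type identification from \cite[Theorem 3.7.2]{z4}, and your induction on the length of the upper center series is just the top-down version of the paper's bottom-up argument that the sets $Z_i(H)T//T$ form a central series of $H//T$ (then citing Lemma \ref{central}). The only remark worth making is that the step you flag as ``genuinely delicate'' --- thinness of $h^T$ for central $h$ --- is in fact a one-line computation, $(Th^*T)(ThT)=Th^*ThT=Th^*hT=T$, using $Th=hT$ and $h^*h=1$; no valency counting or $RT$ hypothesis is needed.
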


In finite group theory, Wielandt proved a well-known result: a finite group $G$ is a nilpotent group
if and only if every subgroup of $G$ is a subnormal subgroup of $G$.
In  \cite{zhang1}, Zhang and Guo  proposed an open question.

\begin{Question}

Let $H$ be a finite nilpotent hypergroup.
Is any closed subset of $H$ is  subnormal in $H$ ?

\end{Question}

In this paper, we  provide a positive answer to this question and get the following theorem.

\begin{Theorem}\label{subnormal}

Closed subsets of finite weakly hypergroups $H$ are strongly  subnormal in $H$.

\end{Theorem}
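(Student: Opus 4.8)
The plan is to proceed by induction on the valency $n_H$ of $H$, which is well defined since every weakly nilpotent hypergroup is $RT$. If $n_H=1$ then $H=\{1\}$ and there is nothing to prove, so assume $n_H>1$. Since $H$ is weakly nilpotent and $H\neq\{1\}$, its upper center series cannot be trivial at the bottom: were $Z_1(H)=\{1\}$, a trivial induction on $i$ via Definition \ref{hyper} would give $Z_i(H)=\{1\}$ for all $i$, contradicting $Z_n(H)=H$. Hence $Z(H)=Z_1(H)\neq\{1\}$; by Lemma \ref{normal} it is a normal closed subset of $H$, and it consists of thin elements, so it is a thin closed subset with $n_{Z(H)}=|Z(H)|\geq 2$.

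Fix an arbitrary closed subset $U$ of $H$. The first step treats the bottom of the chain: I would show that $U$ is strongly normal in $UZ(H)$. Since every element of $Z(H)$ commutes (setwise) with every element of $H$ and is thin, one checks directly that $UZ(H)=Z(H)U$ is a closed subset of $H$ containing $U$. Given $h\in UZ(H)$, write $h\in uz$ with $u\in U$ and $z\in Z(H)$; then $h^{*}\in z^{*}u^{*}$, so
\[
h^{*}Uh\ \subseteq\ z^{*}u^{*}Uuz\ \subseteq\ z^{*}Uz\ =\ (z^{*}z)U\ =\ U,
\]
where $u^{*}Uu\subseteq U$ because $U$ is closed, $Uz=zU$ because $z$ is central, and $z^{*}z=\{1\}$ because $z$ is thin. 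Thus $U$ is strongly normal in $UZ(H)$.

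The second step climbs the rest of the chain in the quotient $\overline{H}:=H//Z(H)$. By Theorem \ref{qu}, $\overline{H}$ is weakly nilpotent, hence $RT$, and by multiplicativity of the valency along the normal closed subset $Z(H)$ we have $n_{\overline{H}}=n_H/n_{Z(H)}<n_H$. The closed subset $UZ(H)//Z(H)$ of $\overline{H}$ is therefore strongly subnormal in $\overline{H}$ by the induction hypothesis: say $UZ(H)//Z(H)=\overline{F}_0\subseteq\overline{F}_1\subseteq\cdots\subseteq\overline{F}_m=\overline{H}$ with $\overline{F}_{j-1}$ strongly normal in $\overline{F}_j$ for $1\leq j\leq m$. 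Under the correspondence between the closed subsets of $\overline{H}$ and the closed subsets of $H$ containing $Z(H)$, this lifts to closed subsets $UZ(H)=F_0\subseteq F_1\subseteq\cdots\subseteq F_m=H$ of $H$ with $F_j//Z(H)=\overline{F}_j$, and the correspondence carries strong normality back, so $F_{j-1}$ is strongly normal in $F_j$ for $1\leq j\leq m$. Splicing the bottom step onto this chain, $U\subseteq UZ(H)=F_0\subseteq F_1\subseteq\cdots\subseteq F_m=H$ is a chain of closed subsets in which each term is strongly normal in the next, so $U$ is strongly subnormal in $H$; this completes the induction.

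The only step with genuine content is the displayed inclusion at the bottom of the chain; the rest is organisation of quotient hypergroups. The points that must be secured — that a nontrivial finite weakly nilpotent hypergroup has nontrivial center (so the valency really drops in the quotient), that the valency is multiplicative along the normal closed subset $Z(H)$, and, above all, that the correspondence theorem for $H//Z(H)$ transports a strongly normal chain of $\overline{H}$ back to a strongly normal chain of $H$ — are standard in the theory of $RT$ hypergroups or follow from the lemmas already recorded; verifying the last of these carefully is the one place where a little care is needed.
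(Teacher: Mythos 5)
Your proof is correct in substance, but it follows a genuinely different route from the paper's. The paper argues via maximal closed subsets: since $H$ is finite, any closed subset $E$ sits in a chain $E=E_0\subseteq E_1\subseteq\cdots\subseteq E_n=H$ with each term maximal in the next; each $E_{i+1}$ is weakly nilpotent (Theorem \ref{sub}), and a maximal closed subset $M$ of a weakly nilpotent hypergroup is strongly normal because $H//M$ is weakly nilpotent with non-trivial center (Lemma \ref{non}) and has no proper non-trivial closed subsets, hence equals its center and is thin, so Lemma \ref{strong} applies. You instead induct on the valency, prove the bottom step $U$ strongly normal in $UZ(H)$ by a direct computation with thin central elements, and lift a strongly subnormal chain of $H//Z(H)$ through the correspondence theorem; the transport of strong normality you worry about is exactly the (orphaned) lemma the paper proves right after Lemma \ref{thin residue}, so that step is secure. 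What each approach buys: the paper's argument needs only finiteness (to get a maximal chain) and never touches valencies or the correspondence lifting, while yours is more explicit -- unrolling your induction essentially produces the chain $U\subseteq UZ_1(H)\subseteq UZ_2(H)\subseteq\cdots\subseteq UZ_n(H)=H$, tying the subnormal defect to the length of the upper center series, and it avoids any appeal to maximal closed subsets. One caveat in your version: the identity $n_{H//Z(H)}=n_H/n_{Z(H)}$ is invoked for $Z(H)$, which is normal but not in general strongly normal, so you are leaning on the full strength of Blau's valency multiplicativity for RT hypergroups; this is avoidable at no cost by inducting on $|H|$ instead, since every non-identity $z\in Z(H)$ satisfies $z^{Z(H)}=Z(H)=1^{Z(H)}$ and hence $|H//Z(H)|<|H|$.
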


In group theory, there is a well-known result: nilpotent groups are solvable.
In finite hypergroup thoery, we obtain a similar result.

\begin{Theorem}\label{solvable}

Let $H$ be a finite weakly nilpotent hypergroup.
Assume that $H$ is residually thin and $p$-valenced for some prime $p$ divides $n_H$.
Then $H$ is solvable.

\end{Theorem}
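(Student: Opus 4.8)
The plan is to produce a solvable series of $H$ by refining its upper center series, so that weak nilpotency does the real work. Since $H$ is weakly nilpotent, Definition \ref{nilpotent} gives an integer $n$ with $Z_{n}(H) = H$, and hence a chain of closed subsets
$$1 = Z_{0}(H) \subseteq Z_{1}(H) \subseteq \cdots \subseteq Z_{n}(H) = H$$
with $Z_{i}(H)//Z_{i-1}(H) = Z(H//Z_{i-1}(H))$ for every $i$; each $Z_{i}(H)$ is a normal closed subset of $H$ by the Remark after Definition \ref{hyper} (cf. Lemma \ref{normal}). The first thing I would record is that for any hypergroup $K$ the centre $Z(K)$ is a commutative thin closed subset of $K$: every $h \in Z(K)$ satisfies $h^{*}h = 1$, so $Z(K) \subseteq O_{\vartheta}(K)$; it is closed by Lemma \ref{normal}; being thin it is therefore a group; and its elements commute with everything, in particular with one another, so it is abelian. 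Applying this to $K = H//Z_{i-1}(H)$, which is finite because $H$ is, each factor $A_{i} := Z_{i}(H)//Z_{i-1}(H)$ is a finite abelian group.

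The second step is to refine each link of the chain. Fix $i$ and choose a composition series $1 = C_{i,0} \le C_{i,1} \le \cdots \le C_{i,k_{i}} = A_{i}$ of the finite abelian group $A_{i}$, so each $C_{i,j}//C_{i,j-1}$ is cyclic of prime order. Each $C_{i,j}$ is a subgroup of $A_{i}$ and hence a closed subset of $H//Z_{i-1}(H)$; by the correspondence between the closed subsets of a quotient $H//T$ and the closed subsets of $H$ containing $T$, there is a unique closed subset $F_{i,j}$ of $H$ with $Z_{i-1}(H) \subseteq F_{i,j} \subseteq Z_{i}(H)$ and $F_{i,j}//Z_{i-1}(H) = C_{i,j}$; in particular $F_{i,0} = Z_{i-1}(H)$ and $F_{i,k_{i}} = Z_{i}(H)$. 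Concatenating over $i = 1, \dots, n$ gives a chain of closed subsets $1 = F_{0} \subseteq F_{1} \subseteq \cdots \subseteq F_{m} = H$. For a consecutive pair $F_{i,j-1} \subseteq F_{i,j}$ (both between $Z_{i-1}(H)$ and $Z_{i}(H)$), the isomorphism theorem for hypergroup quotients yields
$$F_{i,j}//F_{i,j-1} \;\cong\; \bigl(F_{i,j}//Z_{i-1}(H)\bigr)//\bigl(F_{i,j-1}//Z_{i-1}(H)\bigr) = C_{i,j}//C_{i,j-1},$$
which is cyclic of prime order; in particular $F_{i,j}//F_{i,j-1}$ is thin and $|F_{i,j}//F_{i,j-1}|$ is a prime. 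Hence $\{F_{0}, \dots, F_{m}\}$ witnesses that $H$ is solvable.

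The step I expect to demand the most care is the quotient bookkeeping: checking that $Z(H//Z_{i-1}(H))$ really is a commutative thin \emph{closed} subgroup (so the upper center factors are genuine finite abelian groups), and then invoking, in the hypergroup setting, both the closed-subset correspondence for $H//T$ and the isomorphism $B//A \cong (B//T)//(A//T)$ for closed subsets $T \subseteq A \subseteq B$; these are available from \cite{fz1, z3}. I would also remark that residual thinness of $H$ is in fact automatic for a weakly nilpotent hypergroup (as noted after Example \ref{E1}) and enters here only to make the valency $n_{H}$, and hence the hypothesis on $p$, meaningful; the argument above uses neither $n_{H}$ nor $p$-valencedness. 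An essentially equivalent formulation is an induction on $|H|$ (or on $n_{H}$): if $Z(H) = H$ then $H$ is a finite abelian group and one refines a composition series directly; otherwise $1 \neq Z(H) \neq H$ — the inequality $Z(H) \neq 1$ holding because $Z(H) = 1$ would force $Z_{k}(H) = 1$ for all $k$, contradicting $Z_{n}(H) = H$ with $H \neq 1$ — and one splices a solvable series of the smaller weakly nilpotent residually thin hypergroup $H//Z(H)$ (Theorem \ref{qu}) onto a solvable series of the abelian group $Z(H)$.
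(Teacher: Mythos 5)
Your proof is correct, but it takes a genuinely different route from the paper's. The paper invokes Blau's theorem (\cite[Theorem 1.2]{b2}) to produce the largest strongly normal closed $p$-subset $F = O_p(H)$ and then runs an induction on $n_H$ over three cases ($F = \{1\}$, $F = H$, $1 \neq F \neq H$): the first case reduces to a nilpotent group via Lemma \ref{strong}, the second uses Lemma \ref{p-hypergroup}, and the third splices a solvable series of $F$ (obtained inductively, after checking via \cite[Theorem 6.2]{fz1} and Lemma \ref{a} that $F$ inherits residual thinness and $p$-valencedness) with one of the thin quotient $H//F$, using \cite[Lemma 5.3]{vz1}. You instead refine the upper center series directly: each factor $Z(H//Z_{i-1}(H))$ is a finite abelian group because its elements are thin and commute, and a composition series of each factor pulls back, via the closed-subset correspondence and the isomorphism $(B//T)//(A//T) \cong B//A$, to a chain of closed subsets of $H$ whose consecutive quotients are thin of prime order. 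This is more elementary --- it bypasses the $O_p$ machinery entirely --- and it exposes that the hypotheses of residual thinness and $p$-valencedness are superfluous for this conclusion (only finiteness and weak nilpotency are used), whereas the paper's proof genuinely relies on both. The one ingredient you should pin down with a precise citation is the third isomorphism theorem for hypergroup quotients together with the correspondence theorem; the paper itself leans on \cite[Theorem 3.7.2]{z4} and \cite[Theorem 3.4.6]{z4} for exactly this kind of double-quotient bookkeeping in the proofs of Lemma \ref{central}, Theorem \ref{qu} and Theorem \ref{subnormal}, so the tools are available. Your closing inductive reformulation over $Z(H)$ is essentially the paper's third case with $Z(H)$ playing the role of $F$, and also goes through via \cite[Lemma 5.3]{vz1}.
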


In group theory, there is another well-known result about nilpotent groups:
if a finite group $G$ is nilpotent, then every Sylow subgroup of $G$ is a normal subgroup of $G$.
In  \cite{zhang1}, Zhang and Guo  proposed the other open question.

\begin{Question}

Let $H$ be a finite $RT$ nilpotent hypergroup.
Is any Sylow closed subset $P$ of $H$ is normal in $H$ $?$

\end{Question}

In this paper, we also answer this question with $p$-valenced and get the following theorem.

\begin{Theorem}\label{Sylow}

Let $H$ be a finite weakly nilpotent hypergroup.
Assume that $H$ is residually thin and $p$-valenced for some prime $p$ divides $n_H$.
Then every Sylow $p$-subset $P$ of $H$ is strongly normal in $H$.

\end{Theorem}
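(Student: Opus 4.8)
The proof will follow the template of the classical fact that a subnormal Sylow subgroup of a finite group is normal, upgraded to the ``strong'' setting. Since $H$ is finite and weakly nilpotent, Theorem \ref{subnormal} applies to the Sylow $p$-subset $P$: it is strongly subnormal in $H$, so we may fix a chain of closed subsets $P = F_0 \subseteq F_1 \subseteq \cdots \subseteq F_n = H$ in which $F_{i-1}$ is strongly normal in $F_i$ for every $i$ with $1 \le i \le n$. I would then prove, by induction on $i$, that $P$ is strongly normal in $F_i$; the case $i = n$ is the theorem. Before starting, observe that $P$ is a Sylow $p$-subset of \emph{every} closed subset $F$ with $P \subseteq F \subseteq H$: indeed $n_P$ is a $p$-number, $n_H/n_P$ is a $p'$-number, and $n_P \mid n_F \mid n_H$ by multiplicativity of the valency along a chain of closed subsets of an $RT$ hypergroup, so $n_F/n_P$ divides the $p'$-number $n_H/n_P$. (All of $H, F_0, \dots, F_n$ are again finite, $RT$ and $p$-valenced, since these properties are inherited by closed subsets.)

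The crucial auxiliary statement is: if $K$ is a finite $RT$, $p$-valenced hypergroup, $P$ is a Sylow $p$-subset of $K$, and $P$ is strongly normal in $K$, then $P$ is the unique Sylow $p$-subset of $K$. To prove it, let $Q$ be any Sylow $p$-subset of $K$. Since $P$ is (strongly) normal in $K$, the product $QP$ is a closed subset of $K$ and $P$ is normal in $QP$; by an isomorphism theorem $QP//P$ is isomorphic to a quotient of $Q$, so $n_{QP//P}$ is a $p$-number, while $n_{QP//P} = n_{QP}/n_P$ divides the $p'$-number $n_K/n_P$. Hence $n_{QP} = n_P$, forcing $QP = P$, i.e. $Q \subseteq P$; and since $Q$ and $P$ are both Sylow $p$-subsets of $K$ they have equal valency (the $p$-part of $n_K$), so $Q = P$. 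Here the $p$-valenced hypothesis is used through the Sylow theory of $RT$ $p$-valenced hypergroups (existence of Sylow $p$-subsets, the valency identities, the isomorphism theorem), for which I would cite \cite{b2} and \cite{vz1}.

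Now the induction. Suppose $P$ is strongly normal in $F_i$ (this is given for $i = 1$ by the choice of the chain). By the auxiliary statement, $P$ is the unique Sylow $p$-subset of $F_i$. Let $h \in F_{i+1}$. Because $F_i$ is strongly normal in $F_{i+1}$ we have $h^*F_i h \subseteq F_i$, hence $h^*Ph \subseteq F_i$; I would argue that the closed subset $\langle h^*Ph \rangle$ is again a Sylow $p$-subset of $F_i$, of valency $n_P$, so by uniqueness it equals $P$, giving $h^*Ph \subseteq P$. As $h$ ranged over $F_{i+1}$, this says $P$ is strongly normal in $F_{i+1}$, completing the induction and the proof.

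The main obstacle is the step just described: in a hypergroup, conjugation by a single element $h$ need not carry a closed subset to a closed subset, nor obviously preserve valency when $h$ is not thin. To make ``$h^*Ph$ is again a Sylow $p$-subset of $F_i$'' precise, one must pass to the closed subset $\langle h \rangle$, or exploit the strong normality of $F_i$ in $F_{i+1}$ to realize the conjugation inside a suitable quotient of $F_{i+1}$ in which the relevant elements become thin, and then carry out the valency bookkeeping to see that $\langle h^*Ph \rangle$ is a $p$-subset of $F_i$ whose valency equals the $p$-part of $n_{F_i}$. Assembling the exact form of these Sylow-theoretic facts for the closed subsets $F_i$ rather than just for $H$, and checking that the $p$-valenced hypothesis passes down, is the remaining technical work.
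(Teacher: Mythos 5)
Your proposal diverges from the paper's proof and, as you yourself flag at the end, it is not complete: the step ``$\langle h^*Ph\rangle$ is again a Sylow $p$-subset of $F_i$ of valency $n_P$'' is exactly the point where the classical subnormal-Sylow-implies-normal argument breaks in the hypergroup setting, and nothing in your sketch repairs it. Conjugation by a non-thin element $h$ is not an automorphism: since $1\in P$ one has $h^*h\subseteq h^*Ph$, so $\langle h^*Ph\rangle$ contains $\langle h^*h\rangle$, whose valency has no reason to be a $p$-number, and hence $\langle h^*Ph\rangle$ need not be a $p$-subset of $F_i$ at all, let alone one of valency $n_P$. Your auxiliary uniqueness statement (a strongly normal Sylow $p$-subset is the unique Sylow $p$-subset) is plausible and in the spirit of \cite{b2}, but the induction it is meant to feed cannot get off the ground without the conjugation step. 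So there is a genuine gap, not merely ``remaining technical work.''

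The paper avoids conjugation entirely. It invokes Blau's results (Lemma \ref{O}): $H$ has a strongly normal closed $p$-subset $O_{p}(H)$ containing all subnormal $p$-subsets, and $O_{p}(H)$ is contained in \emph{every} Sylow $p$-subset $P$. Strong normality of $O_{p}(H)$ makes $H//O_{p}(H)$ thin (Lemma \ref{strong}); being also weakly nilpotent (Theorem \ref{qu}), this quotient is a nilpotent \emph{group}, in which the Sylow $p$-subgroup $P//O_{p}(H)$ is normal, hence strongly normal as a closed subset. Pulling back along the correspondence for quotients by a normal closed subset gives that $P$ is strongly normal in $H$. If you want to salvage your approach, the missing ingredient is precisely this reduction: rather than conjugating $P$ inside $F_{i+1}$, pass to the quotient of $F_{i+1}$ by a strongly normal $p$-subset so that the ambient hypergroup becomes a genuine group, where conjugation is available. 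That is in effect what the paper does in one step with $O_p(H)$.
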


\begin{Remark}

It is clear that strongly normality is normality.
Hence we answer this question with $p$-valenced.
We do not know if the conditional conclusion of removing $p$-valenced holds true.

\end{Remark}

This paper is organized as follows. In Section $2$, we cite some known results which are useful
in our proofs and prove some basic properties of hypergroups.
In Sections $3$, we give the proofs of Theorems, \ref{sub}, \ref{qu} and \ref{subnormal}.
In Sections $4$, we give the proofs of Theorems \ref{solvable} and \ref{Sylow}.
In Section $5$, we further present some propositions and questions on weakly nilpotent hypergroups.

In this paper, the letter $H$ stands for a  hypergroup.

\section{Some known results and basic preliminaries}

\begin{Lemma}\label{closed}

$(1)$  A subset A of $H$ is closed if and only if $1 \in A$, $A^{*} \subseteq A$ and $AA \subseteq A$. (see \cite[Lemma 2.1.1]{z3})

$(2)$ Let $A, B$ be subsets of $H$. Then $(AB)^* = B^*A^*$.

\end{Lemma}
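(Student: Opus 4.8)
The plan is to handle the two parts separately, since part (1) is exactly the closure characterization recorded as Lemma 2.1.1 in \cite{z3} and may simply be invoked, whereas part (2) needs a direct argument from the axioms. For completeness I would note the short proof of (1). For the forward direction, assume $A$ is closed, so $A \neq \emptyset$ and $A^*A \subseteq A$. Picking any $a \in A$ gives $1 \in a^*a \subseteq A^*A \subseteq A$ (using the recorded consequence $1 \in s^*s$ of (H2)--(H3)), so $1 \in A$. Next, for $a \in A$ we have $\{a^*\} = a^*1 \subseteq A^*A \subseteq A$ by (H2), whence $A^* \subseteq A$. Finally, for $a,b \in A$, writing $a = (a^*)^*$ with $a^* \in A$ gives $ab = (a^*)^*b \subseteq A^*A \subseteq A$, so $AA \subseteq A$. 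The reverse direction is immediate: if $1 \in A$, $A^* \subseteq A$ and $AA \subseteq A$, then $A$ is nonempty and $A^*A \subseteq AA \subseteq A$.

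The substance is part (2). Since $AB = \bigcup_{a \in A,\, b \in B} ab$ and $S^* = \{ s^* \mid s \in S \}$ both commute with unions, it suffices to establish the elementwise identity $(pq)^* = q^*p^*$ for all $p,q \in H$, that is, the equivalence $r \in pq \iff r^* \in q^*p^*$. I would prove this purely from (H3), which states that $r \in pq$ forces both $q \in p^*r$ and $p \in rq^*$. Starting from $r \in pq$, the first conclusion gives $q \in p^*r$; applying (H3) to the triple $(p^*, r, q)$ and using the involution $(p^*)^* = p$ produces $p^* \in qr^*$; applying (H3) once more to the triple $(q, r^*, p^*)$ yields the first conclusion $r^* \in q^*p^*$. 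The converse follows formally by applying this forward implication to the triple $(q^*, p^*, r^*)$ and again invoking $(s^*)^* = s$. Taking unions over $a \in A$, $b \in B$ then gives $(AB)^* = \bigcup_{a,b}(ab)^* = \bigcup_{b \in B,\, a \in A} b^*a^* = B^*A^*$.

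The main obstacle is the careful bookkeeping in the repeated applications of (H3) for the elementwise identity: each relabeling of the triple $(p,q,r)$ must be tracked so that the correct one of the two conclusions is read off at each step, and the cancellations $(p^*)^* = p$ and $(r^*)^* = r$ must be available. I would therefore record the involution $s \mapsto s^*$, with $(s^*)^* = s$, as a standard consequence of the axioms (H2)--(H3) before carrying out the computation, so that it can be cited freely; the rest of the argument is then a routine matter of passing from the atomic statement to arbitrary subsets via distributivity of the hypermultiplication and of the star operation over unions.
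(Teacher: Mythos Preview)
Your argument is correct. Note, however, that the paper itself supplies no proof of this lemma: part~(1) is simply quoted from \cite[Lemma~2.1.1]{z3}, and part~(2) is stated without justification as a routine identity. So there is no ``paper's own proof'' to compare against; what you have written is a correct self-contained expansion of facts the paper treats as known. Your chain of (H3)-applications for the elementwise identity $(pq)^* = q^*p^*$ is the standard one, and the reduction to the union is fine. The only prerequisite you flag---the involution $(s^*)^* = s$---is indeed a basic consequence of (H2)--(H3) recorded in \cite{z3}, so invoking it is legitimate; the paper itself uses it freely (e.g.\ in the proof of Lemma~\ref{normal}) without comment.
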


\begin{Lemma}\label{thin residue}

Let $H$ be a hypergroup.

(1) $O^{\vartheta}(H) = [H, {1}]$ (See \cite[Lemma 5.4]{fz1}).

(2) $O^{\vartheta}(H)$ is strongly normal in $H$ (See \cite[Lemma 3.6(ii)]{fz1}).

\end{Lemma}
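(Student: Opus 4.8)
The two parts are essentially independent, and I would dispatch part (2) first, since it is immediate from the definition of the thin residue. As $O^{\vartheta}(H)$ is by definition the intersection of all strongly normal closed subsets of $H$, and an intersection of closed subsets is closed, only strong normality remains to be checked. For each $h \in H$ and each strongly normal closed subset $F$ we have $O^{\vartheta}(H) \subseteq F$, hence $h^* O^{\vartheta}(H) h \subseteq h^* F h \subseteq F$; intersecting over all such $F$ yields $h^* O^{\vartheta}(H) h \subseteq O^{\vartheta}(H)$, which is exactly strong normality.

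For part (1), I would first unwind the commutator to identify $[H,\{1\}]$ with the closed subset $R := \langle \bigcup_{h \in H} h^* h \rangle$ generated by all products $h^* h$ (using $1h = h1 = \{h\}$, which also explains the symmetric identity $[H,\{1\}] = [\{1\},H]$). The inclusion $R \subseteq O^{\vartheta}(H)$ is the easy half: I would use the routine equivalence that a closed subset $F$ is strongly normal if and only if $H//F$ is thin. Given a strongly normal closed $F$, thinness of $H//F$ forces $(h^F)^* h^F = \{1^F\}$, i.e. $h^* F h \subseteq F$, and since $1 \in F$ this gives $h^* h \subseteq F$ for every $h$. Thus every strongly normal closed subset contains all the generators $h^* h$, hence contains $R$, and intersecting gives $R \subseteq O^{\vartheta}(H)$.

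The reverse inclusion $O^{\vartheta}(H) \subseteq R$ is where the work lies: it suffices to prove that $R$ is itself strongly normal, for then $R$ is one of the sets in the intersection defining $O^{\vartheta}(H)$. The main obstacle is that conjugation $A \mapsto h^* A h$ is not multiplicative in a hypergroup, so one cannot simply conjugate the generators and recombine. The observation that rescues the argument is that $1 \in h h^*$ (apply $1 \in s^* s$ to $s = h^*$), which yields the sub-multiplicativity $h^*(AB)h \subseteq h^* A (h h^*) B h = (h^* A h)(h^* B h)$ for all subsets $A, B$. With this in hand I would argue by induction on the word length $m$ in $R = \bigcup_{m \ge 0} S^{(m)}$, where $S = \bigcup_{h} h^* h$: the base case reduces, via Lemma \ref{closed}(2), to showing $(kh)^*(kh) \subseteq R$, which I would verify elementwise using (H3) (from $u \in kh$ one obtains $k \in uh^*$, so for $u,v \in kh$ one gets $u^* v \subseteq (u^* u)\, h^* h \subseteq R$); the inductive step then follows from sub-multiplicativity together with closedness of $R$.

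Assembling the two inclusions gives $O^{\vartheta}(H) = R = [H,\{1\}]$. I expect the base-case identity $(kh)^*(kh) \subseteq R$ and the sub-multiplicativity bookkeeping to be the only delicate points; everything else is formal manipulation of the axioms (H1)--(H3) and Lemma \ref{closed}.
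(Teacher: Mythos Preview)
The paper does not give its own proof of this lemma: both parts are stated with citations to \cite{fz1} and left at that. (The \texttt{proof} environment appearing immediately after the lemma in the source is an orphaned block about $F//N$ being strongly normal in $H//N$ iff $F$ is strongly normal in $H$; it is not a proof of Lemma~\ref{thin residue}.) So your proposal is supplying an argument the paper simply outsources.

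That said, your argument is correct and is essentially the one in French--Zieschang. Part~(2) is the standard intersection-of-strongly-normal-is-strongly-normal check. For part~(1), your identification $[H,\{1\}]=\langle\,h^*h : h\in H\,\rangle=:R$ is right; the inclusion $R\subseteq O^\vartheta(H)$ via the thinness of $H//F$ (Lemma~\ref{strong}) is clean; and your proof that $R$ is strongly normal---rewriting $h^*(k^*k)h=(kh)^*(kh)$ via Lemma~\ref{closed}(2), then using (H3) to get $v\in u(h^*h)$ for $u,v\in kh$, hence $u^*v\subseteq(u^*u)(h^*h)\subseteq R$---is exactly the intended route. The sub-multiplicativity $h^*(AB)h\subseteq(h^*Ah)(h^*Bh)$ from $1\in hh^*$ handles the inductive step over word length in the $\ast$-invariant generating set. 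Nothing is missing.
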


\begin{proof}
Since $N$ is normal in $H$, for every $f \in F$, ${(h^N)}^{*}{f^N}{h^N} \in F//N$ if and only if
$N{h^{*}}fhN \subseteq F$ if and only if ${h^{*}}fh \in F$.
Hence $F//N$ is strongly normal in $H//N$ if and only if $F$ is strongly normal in $H$.
\end{proof}

\begin{Lemma}\label{strong} {\rm (See \cite[Lemma 3.7]{vz1})}

A closed subset $F$ of $H$ is strongly normal in $H$ if and only if $H//F$ is thin.

\end{Lemma}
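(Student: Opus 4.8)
The plan is to derive the equivalence directly from the construction of the quotient hypergroup $H//F$. First I would collect the elementary facts about $H//F$ that the argument rests on. Since $F$ is closed, Lemma \ref{closed} gives $F^{*}=F$ and $FF=F$, so the identity of $H//F$ is $1^{F}=F1F=F$; and by Lemma \ref{closed}(2) one has, for every $h\in H$, $(h^{F})^{*}=(FhF)^{*}=F^{*}h^{*}F^{*}=Fh^{*}F=(h^{*})^{F}$. I would also use the standard fact that, for $x\in H$, $x^{F}=1^{F}$ if and only if $x\in F$: if $x\in F$ then $FxF\subseteq FFF=F$, while $F\subseteq FxF$ because $1\in x^{*}x$ yields $\{f\}=f1\subseteq f(x^{*}x)=(fx^{*})x\subseteq Fx\subseteq FxF$ for each $f\in F$, so $FxF=F=1^{F}$; conversely $x^{F}=1^{F}$ means $FxF=F$, whence $x\in 1x1\subseteq FxF=F$.

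The heart of the proof is then one computation. By definition $H//F$ is thin exactly when $(h^{F})^{*}\cdot h^{F}=\{1^{F}\}$ for every $h\in H$, and using the facts above with the definition $a^{F}\cdot b^{F}=\{x^{F}\mid x\in aFb\}$ of the hypermultiplication on $H//F$ we get
$$(h^{F})^{*}\cdot h^{F}=(h^{*})^{F}\cdot h^{F}=\{\,x^{F}\mid x\in h^{*}Fh\,\}.$$
Since $x^{F}=1^{F}$ precisely when $x\in F$, this set equals $\{1^{F}\}$ exactly when $h^{*}Fh\subseteq F$. Therefore $H//F$ is thin if and only if $h^{*}Fh\subseteq F$ for all $h\in H$, which is precisely the assertion that $F$ is strongly normal in $H$; this proves both implications simultaneously.

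I do not expect a genuine obstacle here: the proof is essentially bookkeeping, and the only part requiring care is the preliminary step — correctly identifying $1^{F}$, the inverse $(h^{F})^{*}$, and the membership criterion $x^{F}=1^{F}\Leftrightarrow x\in F$ — after which the displayed identity closes the argument in one line. (As this is \cite[Lemma 3.7]{vz1}, one could also simply cite it; the self-contained argument above is equally short.)
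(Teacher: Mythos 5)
Your argument is correct: the identification $1^{F}=F$, the formula $(h^{F})^{*}=(h^{*})^{F}$, the criterion $x^{F}=1^{F}\Leftrightarrow x\in F$, and the computation $(h^{F})^{*}\cdot h^{F}=\{x^{F}\mid x\in h^{*}Fh\}$ are all verified properly, and the equivalence with $h^{*}Fh\subseteq F$ is exactly the definition of strong normality. Note, however, that the paper gives no proof of this lemma at all --- it is stated purely as a citation of \cite[Lemma 3.7]{vz1} --- so there is no internal argument to compare against; your self-contained verification is a sound (and short) substitute for the citation, as you yourself observe in your closing remark.
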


\begin{Lemma}\label{a}

Let $p$ be a prime number, let $H$ be a $p$-valenced hypergroup, and
let $F$ be a strongly normal closed subset of $H$.
Then $F$ is $p$-valenced.
\end{Lemma}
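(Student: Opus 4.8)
\demo
The plan is to verify the defining condition of $p$-valencedness for $F$ by reducing every instance of it to the corresponding instance inside $H$. So I fix an element $h\in F$ together with a subnormal closed subset $U$ of $F$ such that $n_U$ is a $p$-number and ${h^*}^{U}h^{U}\in O_{\vartheta}(F//U)$, and the aim is to show that $|{h^*}^{U}h^{U}|$ is a $p$-number.

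The first step is to observe that $U$ is subnormal not only in $F$ but in $H$: a chain of closed subsets $U=U_0\subseteq U_1\subseteq\cdots\subseteq U_k=F$ with $U_{i-1}$ normal in $U_i$ can be prolonged by the single step $F\subseteq H$, which is a normal step because $F$ is strongly normal, hence normal, in $H$. The second step is to note that the two hypotheses on $U$ are insensitive to the ambient hypergroup. The valency $n_U$ is an invariant of $U$ as a residually thin hypergroup, so it is the same $p$-number as before. And $F//U$ is a closed subset of $H//U$ on which the hypermultiplication of $H//U$ restricts: for $f_1,f_2\in F$ we have $f_1^{*}Uf_2\subseteq F$ since $U\subseteq F$ and $F$ is closed, so $(f_1^{U})^{*}f_2^{U}=\{x^U : x\in f_1^{*}Uf_2\}$ lies in $F//U$. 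Consequently every thin element of $F//U$ is thin in $H//U$, that is, $O_{\vartheta}(F//U)\subseteq O_{\vartheta}(H//U)$; and since $h^{*}Uh\subseteq F$, the set ${h^*}^{U}h^{U}$ is literally the same whether it is formed in $F//U$ or in $H//U$. Hence ${h^*}^{U}h^{U}\in O_{\vartheta}(H//U)$.

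Putting these together, the pair $(h,U)$ is an admissible configuration for the $p$-valencedness of $h$ viewed as an element of $H$: $U$ is a subnormal closed subset of $H$, $n_U$ is a $p$-number, and ${h^*}^{U}h^{U}\in O_{\vartheta}(H//U)$. Since $H$ is $p$-valenced, $|{h^*}^{U}h^{U}|$ is a $p$-number, as wanted; as $h$ and $U$ were arbitrary, $F$ is $p$-valenced. The argument is essentially a routine restriction argument and I expect no real obstacle; the only points requiring a little care are the prolongation of the subnormal chain through the normal step $F\leq H$, and the verification that neither the quotient computation ${h^*}^{U}h^{U}$ nor the notion of thinness changes when one passes from $F//U$ to $H//U$ --- both of which rest only on $U\subseteq F$ and the closedness of $F$.
\qed
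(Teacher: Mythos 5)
Your proposal is correct and follows essentially the same route as the paper's proof: extend the subnormal chain of $U$ in $F$ through the normal step $F\leq H$, observe $O_{\vartheta}(F//U)\subseteq O_{\vartheta}(H//U)$, and invoke the $p$-valencedness of $H$. You simply spell out in more detail the two verifications that the paper asserts without comment.
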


\begin{proof}
For every element $f \in F$,
and for every subnormal closed subset $U$ of $F$ such that $n_U$ is a $p$-number and ${f^*}^{U}f^{U} \in O_{\vartheta}(F//U)$.
Since $F$ be a strongly normal closed subset of $H$,
we have that $U$ is a subnormal closed subset of $H$.
And ${f^*}^{U}f^{U} \in O_{\vartheta}(F//U) \subseteq O_{\vartheta}(H//U).$
It follows from  $H$ is $p$-valenced that $n_{{f^*}^{U}f^{U}} = |{f^*}^{U}f^{U}|$ is a $p$-number.
Hence $F$ is $p$-valenced.
\end{proof}

\begin{Lemma}\label{normal}
$Z_{n}(H)$ is a normal closed subset of $H$ for all positive integer $n$.
\end{Lemma}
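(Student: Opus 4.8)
The plan is to prove the statement by induction on $n$, the main work being the base case $n=1$ and a routine ``pull-back'' along the quotient map.

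For $n=1$, note that $Z_{0}(H)=\{1\}$ and that $H//\{1\}$ is canonically identified with $H$ (since $h^{\{1\}}=\{h\}$), so $Z_{1}(H)=Z(H)$ and it suffices to show that $Z(H)$ is a normal closed subset of $H$. Closedness I would get from Lemma~\ref{closed}(1): clearly $1\in Z(H)$; if $h\in Z(H)$ then $h$ is thin and $hx=xh$ for all $x\in H$, so applying $*$ to these identities (using Lemma~\ref{closed}(2) and $(s^{*})^{*}=s$) yields $h^{*}x=xh^{*}$ for all $x\in H$, while $h^{*}$ is again thin with $hh^{*}=\{1\}$, whence $Z(H)^{*}\subseteq Z(H)$; and if $h,k\in Z(H)$ then $hk$ is a single thin element with $hkx=h(xk)=x(hk)$ for every $x\in H$, so $Z(H)Z(H)\subseteq Z(H)$. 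For normality, for any $h\in Z(H)$ and $g\in H$ one has $hg=gh\subseteq gZ(H)$, hence $Z(H)g\subseteq gZ(H)$.

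For the inductive step, fix $n\ge 2$, assume the claim for $n-1$, and put $N:=Z_{n-1}(H)$, a normal closed subset of $H$ by the induction hypothesis; then $H//N$ is a hypergroup. Applying the case $n=1$ to $H//N$, the centre $Z(H//N)$ is a normal closed subset of $H//N$, and by Definition~\ref{hyper} this centre equals $Z_{n}(H)//N$, with $N\subseteq Z_{n}(H)$. It remains to transfer closedness and normality along the quotient map, i.e.\ to check: if $N$ is normal closed in $H$, $N\subseteq F\subseteq H$, and $F//N$ is a closed (respectively normal) subset of $H//N$, then $F$ is closed (respectively normal) in $H$. For closedness, unwind $x^{N}=NxN$ and invoke Lemma~\ref{closed}(1): $1\in N\subseteq F$; for $f\in F$, the identity $(f^{N})^{*}=(f^{*})^{N}$ (using $N^{*}=N$) together with closedness of $F//N$ gives $Nf^{*}N\subseteq F$, hence $f^{*}\in F$; and for $f,g\in F$ one has $fg\subseteq fNg$, and $f^{N}g^{N}\subseteq F//N$ forces $x^{N}\subseteq F$ for every $x\in fNg$, hence $fg\subseteq F$. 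For normality, take $f\in F$ and $h\in H$; any $y\in fh\subseteq fNh$ satisfies $y^{N}\in f^{N}h^{N}\subseteq h^{N}(F//N)$, so $y\in NxN$ for some $x\in hNk$ with $k\in F$, and since $N$ is normal (so $Nh\subseteq hN$) and $F$ is closed, $NxN\subseteq NhNkN\subseteq hNkN\subseteq hF$; thus $Fh\subseteq hF$. Applying this with $F=Z_{n}(H)$ completes the induction.

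The only places that require genuine care are, first, the base-case verification that the $*$-image and products of central thin elements are again central and thin --- which is exactly where axioms $(H1)$--$(H3)$ and Lemma~\ref{closed}(2) enter --- and, second, the normality half of the pull-back, where one must exploit the normality of $N$ (to absorb the leading factor $Nh$ into $hN$) rather than merely its closedness. Neither step is a real obstacle; the argument is parallel to the one already used in the excerpt for strongly normal closed subsets.
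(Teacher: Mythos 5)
Your proof is correct and follows essentially the same route as the paper: induction on $n$, with the base case verifying directly that $Z(H)$ is a normal closed subset (using thinness of central elements to get $|hk|=1$ and $hh^*=\{1\}$), and the inductive step pulling closedness and normality of $Z_{n}(H)//Z_{n-1}(H)=Z(H//Z_{n-1}(H))$ back to $H$. The only difference is cosmetic: where you prove the normality transfer by hand (absorbing $Nh$ into $hN$), the paper instead cites Lemma 2.7 of \cite{zhang1}, and both arguments share the same implicit use of $Z_{n}(H)$ being a union of $Z_{n-1}(H)$-double cosets.
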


\begin{proof}
For $n=1$, $Z_{1}(H) = Z(H)$ is a normal closed subset.
In fact, $1 \in Z(H)$.
For every element $y \in Z(H)Z(H)$,
then there exist two $a, b \in Z(H)$,  $y in ab$.
By Lemma 1.4.3(i) in \cite{z3}, $|ab|=1$ and so $y = ab$.
For every element $h$ in $H$, $(ab)h = a(hb) =  h(ab)$ ,that is, $yh = hy$.
Next $y^*y = {(ab)}^{*}(ab) = b^*a^*ab =1$.
Hence $Z(H)Z(H) \subseteq Z(H)$.
By Lemma \ref{closed}(2), we have $a^*h = {(h^*a)}^{*} = {(ah^*)}^{*} = ha^*.$
And ${(a^*)}^{*}(a^*) = aa^* = a^*a = 1$.
Then ${Z(H)}^{*} \subseteq Z(H)$.
It implies from Lemma \ref{closed}(1) that $Z_{1}(H) = Z(H)$ is a closed subset of $H$.
Since  $Z(H)h = hZ(H)$, $Z(H)$ is a normal closed subset.
We assume that $Z_{i}(H)$ is a normal colsed subset of $H$.
We show that $Z_{i+1}(H)$ is a normal closed subset of $H$.
Firstly, we show $Z_{i+1}(H)$ is a closed subset of $H$.
Clearly, $1 \in Z_{i+1}(H)$.
For every element $d \in Z_{i+1}(H)Z_{i+1}(H)$,
then there exist two $a, b \in Z_{i+1}(H)$,  $d \in ab$.
Then as above we have
$$d^{Z_{i}(H)} \in {(ab)}^{Z_{i}(H)} \subseteq a^{Z_{i}(H)}b^{Z_{i}(H)} \subseteq Z(H//Z_{i}(H)) = Z_{i+1}(H)//Z_{i}(H).$$
Therefore $d \in Z_{i+1}(H)$, that is, $Z_{i+1}(H)Z_{i+1}(H) \subseteq Z_{i+1}(H)$.
And For every element $x \in Z_{i+1}(H)$,
$$(x^*)^{Z_{i}(H)} = (x^{Z_{i}(H)})^{*} \in Z(H//Z_{i}(H)) = Z_{i+1}(H)//Z_{i}(H).$$
It implies that  $x^* \in Z_{i+1}(H)$.
Hence $Z_{i+1}(H)$ is a closed subset of $H$.
Finally, we show that $Z_{i+1}(H)$ is  normal in $H$.
In fact, as $Z_{i+1}(H)//Z_{i}(H) = Z(H//Z_{i}(H))$,
from the above proof we see that $Z_{i+1}(H)//Z_{i}(H)$ is normal in $H//Z_{i}(H)$.
Now by Lemma 2.7 in \cite{zhang1}, $Z_{i+1}(H)$ is normal in $H$.
Thus, we get that $Z_{n}(H)$ is a normal closed subset of $H$ for all positive integer $n$.
\end{proof}

\section{Proofs of Theorems  \ref{sub}, \ref{qu} and \ref{subnormal}}

In this section, we will prove Theorems \ref{sub}, \ref{qu} and \ref{subnormal}.
To prove our theorems, we first proved the following lemmas, which are the key steps in proving the theorems \ref{sub} and \ref{qu}.

\begin{Lemma}\label{element}

Let $x$ be a central element of $H$ and $T$ a closed subset of $H$.
Then $x^T$ is a central element of $H//T$, that is, $Z(H)T//T \subseteq Z(H//T)$.

\end{Lemma}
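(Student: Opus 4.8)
The plan is to verify, for an arbitrary central element $x\in Z(H)$, that the double coset $x^{T}=TxT$ meets the two defining requirements for membership in $Z(H//T)$: that $x^{T}$ commutes with every $h^{T}\in H//T$, and that $x^{T}$ is thin, i.e. $(x^{T})^{*}x^{T}=\{1^{T}\}$. Since for $x\in Z(H)$ and $t\in T$ one has $y^{T}=x^{T}$ for every $y\in xt$ (an easy consequence of $xT=Tx$ together with $(H3)$), the set $Z(H)T//T$ is exactly $\{x^{T}\mid x\in Z(H)\}$, so establishing the element version will yield the displayed inclusion.

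The main tool will be the set identity $xT=Tx$, which is immediate from $xt=tx$ for all $t\in T$, and dually $x^{*}T=Tx^{*}$. I will also use repeatedly that $T$ being closed gives $T^{*}=T$ and $TT=T$, whence $t^{T}=TtT=T=1^{T}$ for every $t\in T$. Finally I will use two structural facts about the quotient: the hypermultiplication reads $a^{T}\cdot b^{T}=\{y^{T}\mid y\in aTb\}$ and the distinct cosets occurring in it are precisely the cosets contained in $TaTbT$ (here $(H3)$ is used to see that $y\in TuT$ forces $y^{T}=u^{T}$); and $(a^{T})^{*}=(a^{*})^{T}$, which follows from Lemma~\ref{closed}(2) using $T^{*}=T$.

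For commutativity I would observe that $x^{T}\cdot h^{T}$ consists of the cosets contained in $TxThT$ while $h^{T}\cdot x^{T}$ consists of those contained in $ThTxT$; pushing $x$ through the neighbouring copy of $T$ via $xT=Tx$ and collapsing $TT=T$, the first simplifies to $TxhT$ and the second to $ThxT$, and these coincide since $x$ is central, so $x^{T}\cdot h^{T}=h^{T}\cdot x^{T}$ for every $h\in H$. For thinness I would compute $(x^{T})^{*}x^{T}=(x^{*})^{T}\cdot x^{T}=\{y^{T}\mid y\in x^{*}Tx\}$, and for each $t\in T$ note $x^{*}tx=x^{*}(tx)=x^{*}(xt)=(x^{*}x)t=\{1\}t=\{t\}$, so $x^{*}Tx=T$ and hence $(x^{T})^{*}x^{T}=\{t^{T}\mid t\in T\}=\{1^{T}\}$. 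Together these give $x^{T}\in Z(H//T)$.

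The computation is essentially bookkeeping, so the only points needing genuine care will be the two structural facts about the quotient flagged above — that the cosets appearing in $a^{T}\cdot b^{T}$ are exactly those inside $TaTbT$, and that $(a^{T})^{*}=(a^{*})^{T}$ — which should be recorded cleanly before the manipulations, since that is precisely where the $(H3)$ axiom does its work; everything after is just the two set identities $xT=Tx$ and $TT=T$ applied carefully.
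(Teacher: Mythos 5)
Your proposal is correct and follows essentially the same route as the paper's proof: both verify that $x^{T}$ commutes with every $h^{T}$ by reducing $TxThT$ and $ThTxT$ to $TxhT=ThxT$ via $xT=Tx$ and $TT=T$, and both establish thinness from $(Tx^{*}T)(TxT)=Tx^{*}xT=T$. Your write-up is merely more explicit about the quotient bookkeeping (that $(a^{T})^{*}=(a^{*})^{T}$ and that $Z(H)T//T=\{x^{T}\mid x\in Z(H)\}$), which the paper leaves implicit.
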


\begin{proof}
For every $x \in Z(H)$ and $h \in H$, we have that

$(TxT)(ThT) = TxThT =TxhT$ and $(ThT)(TxT) = ThTxT =ThxT$.
Since $xh=hx$, $$(TxT)(ThT)= TxhT = ThxT = (ThT)(TxT).$$
And
$$(Tx^{*}T)(TxT)= Tx^{*}xT = T.$$
It implies that $x^{T} \in Z(H//T)$.
Therefore $Z(H)T//T \subseteq Z(H//T)$.
\end{proof}

\begin{Definition}

We call a series of closed subsets of $H$:

$$H = H_0 \supseteq H_1 \supseteq H_2 \supseteq \cdots \supseteq H_{r-1} \supseteq H_r = 1,$$

a central series of $H$ if $ H_{i-1}//H_{i} \subseteq Z(H//H_{i})$ for $i = 1, 2, \cdots r$.

\end{Definition}

\begin{Lemma}\label{central}

Let $H$ be a hypergroup, and let $1=T_0 \subseteq T_1 \subseteq \cdots \subseteq T_{n-1} \subseteq T_{n}=H$ be a central series of $H$.
Then $T_i \subseteq Z_i(H)$ for each element $i$ in $\{0, 1,\cdots n \}$.
\end{Lemma}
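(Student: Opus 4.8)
The plan is to induct on $i$. The base case $i=0$ is immediate: $T_0=1=Z_0(H)$. For the inductive step, assume $T_{i-1}\subseteq Z_{i-1}(H)$ and put $N:=Z_{i-1}(H)$; by Lemma \ref{normal}, $N$ is a normal closed subset of $H$, so $Ng\subseteq gN$ for every $g\in H$, $NN=N$, and $NT_{i-1}=T_{i-1}N=N$ (using $1\in T_{i-1}\subseteq N$). By the defining relation $Z_i(H)//Z_{i-1}(H)=Z(H//Z_{i-1}(H))$ of Definition \ref{hyper}, and since $N\subseteq Z_i(H)$ with $Z_i(H)$ closed, the subset $Z_i(H)$ is precisely the preimage of $Z(H//N)$ under $h\mapsto h^N$; hence it suffices to prove $x^N\in Z(H//N)$ for each $x\in T_i$.

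Fix $x\in T_i$. Since the given chain is a central series, $T_i//T_{i-1}\subseteq Z(H//T_{i-1})$, i.e. $x^{T_{i-1}}\in Z(H//T_{i-1})$. Reading this back in $H$ yields two facts: (i) $x^*T_{i-1}x\subseteq T_{i-1}$, hence $x^*x\subseteq T_{i-1}\subseteq N$; and (ii) $T_{i-1}xT_{i-1}hT_{i-1}=T_{i-1}hT_{i-1}xT_{i-1}$ for every $h\in H$. I will combine (i), (ii) and the normality of $N$.

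Thinness of $x^N$ is the statement $Nx^*NxN=N$: pushing $N$ to the outside one factor at a time, $Nx^*NxN\subseteq x^*NxN\subseteq x^*xN\subseteq N$ by (i), while $N\subseteq Nx^*xN\subseteq Nx^*NxN$ since $1\in x^*x$. Centrality of $x^N$ is the statement $NxNhN=NhNxN$ for all $h\in H$: the same moves give $NxNhN=xhN$ and $NhNxN=hxN$, while sandwiching (ii) between copies of $N$ and absorbing (via $NT_{i-1}=T_{i-1}N=N$) turns it into $xhN=NxT_{i-1}hN=NhT_{i-1}xN=hxN$. Thus $x^N\in Z(H//N)=Z_i(H)//N$, so $x\in Z_i(H)$; as $x\in T_i$ was arbitrary, $T_i\subseteq Z_i(H)$, completing the induction.

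I do not foresee a genuine obstacle; the one delicate point is that $N=Z_{i-1}(H)$ is only known to be normal, not strongly normal, so each manipulation above is arranged to move $N$ past a single element only in the direction $Ng\subseteq gN$, which is all that normality provides. A shorter argument for the inductive step is available if one invokes the third isomorphism theorem for hypergroups: apply Lemma \ref{element} to the hypergroup $H//T_{i-1}$ and its closed subset $Z_{i-1}(H)//T_{i-1}$, and then identify $(H//T_{i-1})//(Z_{i-1}(H)//T_{i-1})$ with $H//Z_{i-1}(H)$; the computation above is the self-contained alternative.
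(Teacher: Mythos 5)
Your proof is correct, but it takes a genuinely different route from the paper's. The paper handles the inductive step entirely at the level of quotients: it applies Lemma \ref{element} to the hypergroup $H//T_{i-1}$ with the closed subset $Z_{i-1}(H)//T_{i-1}$, and then uses the isomorphism-theorem machinery of \cite[Theorems 3.4.6 and 3.7.2]{z4} to transport the containment $T_i//T_{i-1}\subseteq Z(H//T_{i-1})$ through the double quotient $(H//T_{i-1})//(Z_{i-1}(H)//T_{i-1})\cong H//Z_{i-1}(H)$ and land in $Z_i(H)//Z_{i-1}(H)$. That is exactly the ``shorter argument'' you sketch in your closing remark. Your main argument instead unwinds everything to element-level coset computations in $H$: you read the hypotheses $x^{T_{i-1}}\in Z(H//T_{i-1})$ back as $x^*T_{i-1}x\subseteq T_{i-1}$ and $T_{i-1}xT_{i-1}hT_{i-1}=T_{i-1}hT_{i-1}xT_{i-1}$, and verify thinness and centrality of $x^N$ directly using only $Nh\subseteq hN$, $NN=N$, and absorption of $T_{i-1}$ into $N$. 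I checked the key identities ($Nx^*NxN=N$, $NxNhN=xhN$, and the sandwiching of the commutation relation), and they all go through; your care in only pushing $N$ rightward past single elements is exactly what plain normality permits. What your approach buys is self-containedness -- it avoids any appeal to the quotient-of-quotient identifications in \cite{z4}, whose hypotheses (e.g.\ that the relevant subsets are closed in the quotient) the paper must separately justify -- at the cost of a longer computation; the paper's approach is more modular but leans on external structural results.
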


\begin{proof}

Since $T_0 \subseteq T_1 \subseteq \cdots \subseteq T_{n-1} \subseteq T_{n}=H$ is a central series of $H$,
we have $T_0 = \{ 1 \} = Z_0(H)$.
Thus, if $i = 0, T_i \subseteq Z_i(H)$, so that we are done in this case.

Assume that $1 \leq i$ and that the statement is true for $i-1$.
Then $T_{i-1} \subseteq Z_{i-1}(H)$.
Thus, by \cite[Theorem 3.4.6]{z4}, $Z_{i-1}(H)//T_{i-1}$ is a closed subset of $H//T_{i-1}$,
so that we may apply to $H//T_{i-1}$ and  $Z_{i-1}(H)//T_{i-1}$ in place of $H$ and $T$.
We obtain that

$$ Z(H//T_{i-1})Z_{i-1}(H)//T_{i-1}//Z_{i-1}(H)//T_{i-1} \subseteq  Z((H//T_{i-1})//(Z_{i-1}(H)//T_{i-1})).$$

Now recall that $1=T_0 \subseteq T_1 \subseteq \cdots \subseteq T_{n-1} \subseteq T_{n}=H$ is assume to be a central series of $H$.
Thus
$$T_i//T_{i-1} \subseteq Z(H//T_{i-1}).$$
It follows that
$$ (T_i//T_{i-1})(Z_{i-1}(H)//T_{i-1})//Z_{i-1}(H)//T_{i-1} \subseteq  Z((H//T_{i-1})//(Z_{i-1}(H)//T_{i-1})).$$
From \cite[Theorem 3.7.2]{z4} we also obtain that
$$  (T_iZ_{i-1}(H))//T_{i-1} \subseteq (T_i//T_{i-1})(Z_{i-1}(H)//T_{i-1}).$$
Thus
$$ (T_iZ_{i-1}(H))//T_{i-1}//(Z_{i-1}(H)//T_{i-1}) \subseteq  Z((H//T_{i-1})//(Z_{i-1}(H)//T_{i-1}))$$
and then, by \cite[Theorem 3.7.2]{z4},
$$(T_iZ_{i-1}(H))//Z_{i-1}(H) \subseteq Z(H//Z_{i-1}(H)) = Z_{i}(H)//Z_{i-1}(H).$$
It follows that $T_i \subseteq Z_i(H)$.
\end{proof}

\begin{Corollary}
Hypergroups which posses a central series are weakly nilpotent.
\end{Corollary}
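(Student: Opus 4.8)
The corollary follows almost immediately from Lemma \ref{central}. The plan is to observe that a central series is, by definition, a descending chain, whereas Lemma \ref{central} is phrased for an ascending chain, so the only real task is to reconcile the two indexing conventions and then read off the conclusion.

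First I would take a hypergroup $H$ possessing a central series
$$H = H_0 \supseteq H_1 \supseteq \cdots \supseteq H_{r-1} \supseteq H_r = 1,$$
so that $H_{i-1}//H_i \subseteq Z(H//H_i)$ for each $i$. I would then relabel by setting $T_j := H_{r-j}$ for $j = 0, 1, \ldots, r$, which produces an ascending chain $1 = T_0 \subseteq T_1 \subseteq \cdots \subseteq T_r = H$. The defining condition of a central series, rewritten in the new indices, states exactly that $T_j // T_{j-1} \subseteq Z(H//T_{j-1})$ for each $j \in \{1, \ldots, r\}$; that is, $1 = T_0 \subseteq \cdots \subseteq T_r = H$ is a central series of $H$ in the sense required by the hypothesis of Lemma \ref{central}.

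Next I would apply Lemma \ref{central} to this ascending central series, obtaining $T_j \subseteq Z_j(H)$ for every $j \in \{0, 1, \ldots, r\}$. Taking $j = r$ gives $H = T_r \subseteq Z_r(H) \subseteq H$, hence $Z_r(H) = H$. By Definition \ref{nilpotent}, this means precisely that $H$ is weakly nilpotent, which is what we wanted.

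I do not expect any genuine obstacle here; the entire content has already been packaged into Lemma \ref{central}, and the corollary is just the specialization of that lemma to the top of the chain together with the bookkeeping needed to turn the descending central series into the ascending one that the lemma consumes. The only point requiring a moment's care is making the index substitution $T_j = H_{r-j}$ explicit and checking that the inclusion $H_{i-1}//H_i \subseteq Z(H//H_i)$ becomes $T_j//T_{j-1} \subseteq Z(H//T_{j-1})$ under it, which is immediate.
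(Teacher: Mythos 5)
Your proposal is correct and matches the paper's proof, which simply invokes Lemma \ref{central}; your extra care in reindexing the descending series as an ascending one and then taking the top term $T_r = H \subseteq Z_r(H)$ just makes explicit what the paper leaves implicit.
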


\begin{proof}
This follows immediately from Lemma \ref{central}.
\end{proof}

\textbf{Proof of Theorem \ref{sub}}~~

Since $H$ is a weakly nilpotent hypergroup, there exists a central series of $H$:
$$H = H_0 \supseteq H_1 \supseteq H_2 \supseteq \cdots \supseteq H_{r-1} \supseteq H_r = 1.$$
Let $N_i = N \cap H_i$.  Then we show that
$$N = N_0 \supseteq N_1 \supseteq N_2 \supseteq \cdots \supseteq N_{r-1} \supseteq N_r = 1$$
is a central series of $N$.
Firstly, we have that
$$ N//N_{i+1} \cong N//(N \cap H_{i+1}) \cong H_{i+1}N//H_{i+1} \subseteq H//H_{i+1}.$$
This induces that
$$ N_i//N_{i+1} \cong H_{i+1}N_i//H_{i+1} \subseteq H_i//H_{i+1}.$$
Then $N_i//N_{i+1} \subseteq Z(N//N_{i+1})$ since $H_i//H_{i+1} \subseteq Z(H//H_{i+1})$.
Therefore $N$ is a weakly nilpotent hypergroup.
\qed
\medskip

\textbf{Proof of Theorem \ref{qu}}~~

Since $H$ is a weakly nilpotent hypergroup, there exists the upper central series:
$$1 \subseteq Z_{1}(H) \subseteq Z_{2}(H) \subseteq \cdots \subseteq Z_{n-1}(H) \subseteq Z_{n}(H) = H$$
such that $Z_{i}(H)//Z_{i-1}(H) = Z(H//Z_{i-1}(H))$ for all $i = 1, 2, \cdots n$.
Then we have
$$T//T \subseteq Z_{1}(H)T//T \subseteq Z_{2}(H)T//T \subseteq \cdots \subseteq Z_{n-1}(H)T//T \subseteq Z_{n}(H)//T = H//T.$$
Since  $Z_{i}(H)//Z_{i-1}(H) = Z(H//Z_{i-1}(H))$ and Lemma \ref{element},
we have that
$$(Z_{i}(H)//Z_{i-1}(H))(Z_{i-1}(H)T//Z_{i-1}(H))//(Z_{i-1}(H)T//Z_{i-1}(H))$$
 $$= Z(H//Z_{i-1}(H))(Z_{i-1}(H)T//Z_{i-1}(H))//(Z_{i-1}(H)T//Z_{i-1}(H))$$
$$ \subseteq Z((H//Z_{i-1}(H))//(Z_{i-1}(H)T//Z_{i-1}(H))).$$
Note that
$$Z_{i}(H)T//Z_{i-1}(H) \subseteq (Z_{i}(H)//Z_{i-1}(H))(Z_{i-1}(H)T//Z_{i-1}(H)).$$
Then
$$(Z_{i}(H)T//Z_{i-1}(H))//(Z_{i-1}(H)T//Z_{i-1}(H)) \subseteq Z(H//Z_{i-1}(H))//(Z_{i-1}(H)T//Z_{i-1}(H)).$$
Thus, by \cite[Theorem 3.7.2]{z4},
$$Z_{i}(H)T//Z_{i-1}(H)T \subseteq Z(H//Z_{i-1}(H)T).$$
By \cite[Theorem 3.7.2]{z4} again,
$$(Z_{i}(H)T//T)//(Z_{i-1}(H)T//T) \subseteq Z((H//T)//(Z_{i-1}(H)T//T)).$$
By Lemma \ref{central}, $H//T$ is a weakly nilpotent hypergroup.
\qed
\medskip

Recall a closed subset $M$ is {\em maximal} in $H$ if $M \neq H$ and when there exits a closed subset $K$ of $H$ such that $M \subseteq K \subset H$, then $M = K$ (See \cite{zhang1}).

\begin{Lemma}\label{non}
If $H$ is a nontrivial weakly nilpotent hypergroup, then $Z(H) \neq 1$.
\end{Lemma}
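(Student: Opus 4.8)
The claim is the hypergroup analogue of the classical fact that a nontrivial nilpotent group has nontrivial center, and the plan is to mimic the group-theoretic argument via the upper center series. Since $H$ is weakly nilpotent, by Definition \ref{nilpotent} there is a non-negative integer $n$ with $Z_n(H) = H$, and because $H$ is nontrivial we have $n \geq 1$. Choose $n$ minimal with $Z_n(H) = H$; then $Z_{n-1}(H) \neq H = Z_n(H)$, so in particular $Z_n(H)//Z_{n-1}(H) \neq \{1^{Z_{n-1}(H)}\}$ as a subset of $H//Z_{n-1}(H)$. By the definition of the upper center series, $Z_n(H)//Z_{n-1}(H) = Z(H//Z_{n-1}(H))$, so $Z(H//Z_{n-1}(H)) \neq 1$.

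From here the argument splits on whether $n = 1$ or $n > 1$. If $n = 1$, then $Z_{n-1}(H) = Z_0(H) = 1$, and the conclusion $Z(H) = Z(H//1) \neq 1$ is immediate. If $n > 1$, I would argue by contraposition: suppose $Z(H) = Z_1(H) = 1$. I claim this forces $Z_i(H) = 1$ for all $i$, contradicting $Z_n(H) = H \neq 1$. The inductive step is that if $Z_{i-1}(H) = 1$ then $Z_i(H)//Z_{i-1}(H) = Z_i(H)$ (identifying $H//1$ with $H$), and $Z_i(H) = Z_i(H)//Z_{i-1}(H) = Z(H//Z_{i-1}(H)) = Z(H) = 1$. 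So the whole series collapses and $H = Z_n(H) = 1$, a contradiction. Either way $Z(H) \neq 1$.

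Alternatively — and this may be the cleaner write-up — one can avoid the case split entirely by directly using minimality of $n$: take $n \geq 1$ least with $Z_n(H) = H$. If $n = 1$ we are done as above. If $n \geq 2$, then by minimality $Z_1(H) \subsetneq H$ but also I want to pin down that $Z_1(H) \neq 1$; this is exactly where one needs the observation that in a chain $1 = Z_0 \subseteq Z_1 \subseteq \cdots \subseteq Z_n = H$ of a weakly nilpotent hypergroup, the first nontrivial quotient occurs at some stage, and the "center" definition propagates nontriviality downward, which is precisely the induction in the previous paragraph. So the genuinely load-bearing step is the implication "$Z_{i-1}(H) = 1 \Rightarrow Z_i(H) = Z(H)$," together with the identification of $H//Z_0(H) = H//1$ with $H$ itself (every thin element $1$ gives $1^1 = \{1\}$, so $h^1 = \{h\}$ and $H//1 \cong H$ as hypergroups).

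\textbf{Main obstacle.} No step is deep; the only thing to be careful about is the bookkeeping with quotients $H//Z_{i-1}(H)$ and the identification $H//\{1\} \cong H$, plus making sure the minimality of $n$ is used correctly so that $Z_{n-1}(H)$ is genuinely a proper closed subset of $H$ (which is what makes $Z(H//Z_{n-1}(H))$ nontrivial). One should also invoke Lemma \ref{normal} to know each $Z_i(H)$ is a closed (in fact normal) subset, so that the quotients $H//Z_{i-1}(H)$ and the equalities $Z_i(H)//Z_{i-1}(H) = Z(H//Z_{i-1}(H))$ make sense as stated in Definition \ref{hyper}. I expect the proof to be three or four lines once these conventions are fixed.
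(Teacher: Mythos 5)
Your core argument — assume $Z(H)=1$, propagate $Z_{i-1}(H)=1 \Rightarrow Z_i(H)=Z(H//1)=Z(H)=1$ by induction, and contradict $Z_n(H)=H\neq 1$ — is exactly the paper's proof, which runs the same contradiction via $Z_2(H)//Z(H)=Z(H//Z(H))$. The extra discussion of minimality of $n$ and the case split is harmless but unnecessary; the proposal is correct and takes essentially the same route.
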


\begin{proof}

Suppose that $Z(H) = 1$.
It implies from $Z_{2}(H)//Z(H) = Z(H//Z(H))$ that $Z_{2}(H) = 1$.
Similarly, $Z_{i}(H) = 1$ for all $i = 1, 2, \cdots$.
Then $H = Z_{n}(H) = 1$ for some integer $n$, a contradiction.
Therefore $Z(H) \neq 1$.
\end{proof}

\textbf{Proof of Theorem \ref{subnormal}}~~

We will take the following steps to prove the necessity of the theorem.

{\bf Step $(1)$} {\sl If $F$ is a proper closed subset of $H$, then $Z(H//F)$ is non-trivial.}

Since $H$ is a weakly nilpotent hypergroup, $H//F$ is a weakly nilpotent hypergroup by Theorem \ref{qu}.
Hence it follows from Lemma \ref{non} that $Z(H//F)$ is non-trivial.

{\bf Step $(2)$} {\sl If $M$ is a maximal closed subset of $H$, then $M$ is strongly normal in $H$.}

It follows from Step $(1)$ that $Z(H//M) \neq 1$.
Since $M$ is a maximal closed subset of $H$, $H//M$ has no non-trivial closed subsets by \cite[Theorem 3.4.6]{z4}.
Then $H//M = Z(H//M)$.
Hence $M$ is strongly normal in $H$.

{\bf Step $(3)$} {\sl For every closed subset $E$ of $H$, $E$ is strongly subnormal in $H$.}

In the case that $E=H$, clearly $E$ is strongly normal in $H$ and so $E$ is strongly subnormal in $H$.
Now assume that $E \neq H$ is a proper closed subset of $H$.
Then there exits a closed subset chain
$$E = E_{0} \leq E_{1} \leq \cdots \leq E_{n} = H$$
such that $E_{i}$ is a maximal closed subset of $E_{i+1}$ for $i=0, 1, \cdots, n-1$.
By Theorem \ref{sub}, $E_{i}$ is nilpotent for $i=0, 1, \cdots, n$.
It follows from  Step $(2)$ that $E_{i}$ is strongly normal in $E_{i+1}$ for $i=0, 1, \cdots, n-1$.
Therefore $E$ is strongly subnormal in $H$.
The necessity of the theorem holds on.
\qed
\medskip

\section{Proofs of Theorems \ref{solvable} and \ref{Sylow}}

\begin{Definition}

A finite residually thin hypergroup $P$ is called a $p$-hypergroup if $n_P$ is a power of $p$.

\end{Definition}

\begin{Lemma}\label{p-hypergroup}

If $H$ is a finite $RT$ $p$-hypergroup, then $H$ is a solvable hypergroup.

\end{Lemma}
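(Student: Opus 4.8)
The plan is to show that a finite $RT$ $p$-hypergroup $H$ possesses a chain of closed subsets witnessing solvability in the sense of Vasil'ev--Zieschang, i.e. closed subsets $F_0 = 1 \subseteq F_1 \subseteq \cdots \subseteq F_n = H$ with each $F_{i-1} \leq F_i$, each quotient $F_i // F_{i-1}$ thin, and $|F_i // F_{i-1}|$ prime. Since $H$ is $RT$, we already have some chain $1 = G_0 \subseteq G_1 \subseteq \cdots \subseteq G_m = H$ with each $G_i // G_{i-1}$ thin; the valency is $n_H = \prod_i |G_i // G_{i-1}|$, and by hypothesis $n_H$ is a power of $p$, so each $|G_i // G_{i-1}|$ is also a power of $p$. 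The issue is that these factors need not be prime; I would refine the chain so that every factor has order exactly $p$.

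First I would reduce to the case of a thin quotient. Each $G_i // G_{i-1}$ is a thin hypergroup, hence (by the remark in the introduction that a thin hypergroup is a group) a $p$-group in the ordinary sense, with $|G_i // G_{i-1}|$ a power of $p$. A nontrivial finite $p$-group has a normal subgroup of index $p$; pulling this back through the quotient map $G_i \to G_i // G_{i-1}$ (using \cite[Theorem 3.4.6]{z4} to get the correspondence between closed subsets of $G_i$ containing $G_{i-1}$ and closed subsets of $G_i // G_{i-1}$) produces a closed subset $G_{i-1} \subseteq L \subseteq G_i$ with $G_i // L$ thin of order $p$ and $L // G_{i-1}$ thin of order a power of $p$. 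Iterating this inside each original step $G_{i-1} \subseteq G_i$ refines the whole chain so that every successive quotient is thin of prime order $p$.

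Then I would check that the refined chain $1 = F_0 \subseteq F_1 \subseteq \cdots \subseteq F_n = H$ still has all the required properties: each inclusion $F_{i-1} \leq F_i$ is an inclusion of closed subsets (inherited from the construction, using that the preimages $L$ above are closed in $G_i$, hence closed in $H$), each $F_i // F_{i-1}$ is thin by construction, and $|F_i // F_{i-1}| = p$ is prime. This is exactly the definition of a solvable hypergroup, so $H$ is solvable.

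The main obstacle is the bookkeeping of quotients-of-quotients: I must make sure that when I refine a step $G_{i-1} \subseteq G_i$ by inserting $L$, the factor $L // G_{i-1}$ is genuinely thin (not merely a quotient of a thin hypergroup by something), and that the iterated refinement terminates after finitely many steps. Both follow from the fact that $G_i // G_{i-1}$ is a finite group: its subgroup lattice is finite and a composition series has all factors of order $p$, so the induction on $|G_i // G_{i-1}|$ is finite and each factor group is thin (being a section of a group, viewed as a thin hypergroup). The transfer of "closed" and "thin" across the correspondence theorem \cite[Theorem 3.4.6]{z4} and \cite[Theorem 3.7.2]{z4} is the only place where care is needed, but these are exactly the tools already invoked repeatedly in Section 3.
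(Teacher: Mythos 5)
Your proposal is correct and follows essentially the same route as the paper: take the RT chain, observe that each thin factor is a finite $p$-group because its order divides the $p$-power $n_H$, and conclude solvability. You are in fact more careful than the paper's own proof, which stops at ``each factor is a $p$-group'' without spelling out the refinement to prime-order factors that the Vasil'ev--Zieschang definition of solvability requires; your correspondence-theorem refinement supplies exactly that missing step.
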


\begin{proof}

Since $H$ is a $RT$ hypergroup, there exists a series of closed subsets of $H$:

$$1 = H_m \subseteq H_{m-1} \subseteq \cdots \subseteq H_2 \subseteq H_1 \subseteq H_0 = H$$
such that $H_i//H_{i+1}$ is  thin, $i= 0, 1, 2, \cdots, m-1$.
And  $|H_i//H_{i+1}|=n_{H_i//H_{i+1}}$ divides  $n_H$.
Then $|H_i//H_{i+1}|$ is a power of $p$.
Hence $H_i//H_{i+1}$ is a $p$-group, $i= 0, 1, 2, \cdots, m-1$.
So $H$ is a solvable hypergroup.
\end{proof}

\textbf{Proof of Theorem \ref{solvable}}~~

We are assuming that $H$ is a finite residually thin nilpotent hypergroup which is $p$-valenced.
Thus, by \cite[Theorem 1.2]{b2}, $H$ contains a strongly normal closed $p$-subset $F$ that contains all subnormal closed $p$-subsets of $H$.

Assume first that $F = \{ 1 \}$.
Then $\{ 1 \}$ is a is strongly normal in $H$, so that, by Lemma \ref{strong}, $H$ is  thin.
Thus, as $H$ is assumed to be weakly nilpotent, $H$ is a nilpotent group.
Hence $H$ is solvable.

Assume now that $F = H$. Then, $H$ is residually thin and $n_H$ a $p$-power.
Thus, by Lemma \ref{p-hypergroup}, $H$ is solvable.

Assume finally that $\{ 1 \} \neq F \neq H$.
Since $H$ is weakly nilpotent, $F$ and $H//F$ are both weakly nilpotent; cf. Theorem \ref{sub} and Theorem \ref{qu}, respectively.
As $F$ is a strongly normal closed $p$-subset of $H$,
$H//F$ is thin  by Lemma \ref{strong}.
Then $H//F$ is a nilpotent group.
Hence $H//F$ is solvable.
Since $H$ is residually thin,
$F$ is residually thin; cf. \cite[Theorem 6.2]{fz1}.
And since $H$ is $p$-valenced, $F$ is $p$-valenced; cf. Lemma \ref{a}.
Now recall that  $H$ is assume to be finite.
Thus, by \cite[Theorem 1.1]{b2}, $n_{F}n_{H//F} = n_H$.
Since $1 \neq F \neq H$, $n_H > n_{F}$.
Then induction yields that $F$ is solvable.
It follows that $H$ is a solvable hypergroup; cf. \cite[Lemma 5.3]{vz1}
\qed
\medskip

\begin{Lemma}\label{O}(See \cite[Theorem 1.2 and Theorem 1.3]{b2})
Let $p$ be a  prime number and $H$ a finite $RT$ hypergroup that is $p$-valenced.
Then

(1) $H$ contains a strongly normal $p$-subset $O_{p}(H)$ that contains all the subnormal $p$-subsets of $H$.

(2) Every Sylow $p$-subset of $H$ contains $O_{p}(H)$.
\end{Lemma}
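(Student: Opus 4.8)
The statement is (a reconstruction of) Blau's Theorems 1.2 and 1.3 in the setting of a finite $RT$ $p$-valenced hypergroup, so the plan is to build $O_{p}(H)$ by hand and verify its three properties. The plan is to set $O_{p}(H) := \langle \bigcup A \rangle$, the closed subset generated by all subnormal $p$-subsets $A$ of $H$, and then show this is itself a strongly normal $p$-subset containing each such $A$. The engine is a \emph{join lemma}: if $A$ and $B$ are subnormal $p$-subsets of $H$, then $\langle A, B\rangle$ is again a subnormal $p$-subset. To prove it I would control the valency via the isomorphism theorem \cite[Theorem 3.7.2]{z4}, which gives $\langle A,B\rangle // A \cong B // (B\cap A)$ (and its iterates along a subnormal chain), so that by the multiplicativity $n_{F} n_{H//F} = n_{H}$ of \cite[Theorem 1.1]{b2} each layer valency of $\langle A, B\rangle$ is a $p$-number; hence $n_{\langle A,B\rangle}$ is a $p$-number. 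Subnormality of the join would follow from the standard closure of subnormal closed subsets under joins.

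Since $H$ is finite there are only finitely many subnormal $p$-subsets, so iterating the join lemma yields a unique maximal subnormal $p$-subset $O_{p}(H)$ that contains every subnormal $p$-subset of $H$; this already gives the containment clause of (1). For strong normality I would observe that the family of subnormal $p$-subsets is stable under the conjugation-type operation $A \mapsto h^{*}Ah$ for $h \in H$ (valency and subnormality are preserved), so that $h^{*}O_{p}(H)h$ is again a subnormal $p$-subset and hence $h^{*}O_{p}(H)h \subseteq O_{p}(H)$ by maximality, for every $h \in H$; this is precisely strong normality, and by Lemma \ref{strong} it is equivalent to $H//O_{p}(H)$ being thin. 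This completes (1).

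For (2), let $P$ be any Sylow $p$-subset. Because $O_{p}(H)$ is strongly normal, $O_{p}(H)P$ is a closed subset, and by \cite[Theorem 3.7.2]{z4} together with \cite[Theorem 1.1]{b2} its valency is $n_{O_{p}(H)}\,n_{P}/n_{O_{p}(H)\cap P}$, a power of $p$; thus $O_{p}(H)P$ is a $p$-subset containing $P$, and the Sylow condition ($n_{H}/n_{P}$ a $p'$-number) forces $O_{p}(H)P = P$, whence $O_{p}(H) \subseteq P$. The main obstacle is the pair of manipulations underpinning Paragraphs 1 and 2: in a hypergroup the product of two closed subsets need not be closed and the "conjugate" $h^{*}Ah$ need not obviously be closed or valency-preserving, so the delicate point is to show that strong normality (equivalently, thinness of the relevant quotient via Lemma \ref{strong}) is exactly the hypothesis that legitimizes both the product $O_{p}(H)P$ and the conjugation-stability, while keeping all valency bookkeeping $p$-local through \cite[Theorem 1.1]{b2} and \cite[Theorem 3.7.2]{z4}.
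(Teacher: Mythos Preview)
The paper does not prove this lemma at all; it is simply quoted from Blau \cite[Theorems~1.2 and~1.3]{b2} and used as a black box. There is therefore no ``paper's own proof'' to compare against, and your proposal is by construction a different route, namely an attempted reconstruction of Blau's result.

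As a reconstruction, the architecture is plausible but the two steps you yourself flag as ``the main obstacle'' are not merely delicate, they are genuine gaps as written. For the join lemma, the isomorphism $\langle A,B\rangle // A \cong B // (A\cap B)$ is only available when $A$ is normal in $AB$ and $AB$ is already closed (so that $AB=\langle A,B\rangle$); neither follows from subnormality of $A,B$ in $H$, so your valency bookkeeping for $n_{\langle A,B\rangle}$ does not go through, and the appeal to a ``standard closure of subnormal closed subsets under joins'' imports a group-theoretic fact (Wielandt) that is not established for hypergroups in this paper. For strong normality, $h^{*}Ah$ is in general a union of hyperproducts rather than a closed subset, and nothing guarantees it is again a subnormal $p$-subset, so the maximality argument cannot be applied directly. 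Note also that the $p$-valenced hypothesis, which is essential in \cite{b2}, plays no real role in your sketch beyond invoking \cite[Theorem~1.1]{b2}; this is a sign that the mechanism producing $O_p(H)$ has been bypassed. Your argument for part~(2) is fine once part~(1) is granted: with $O_p(H)$ strongly normal the product $O_p(H)P$ is closed and the valency computation forces $O_p(H)P=P$.
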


\textbf{Proof of Theorem \ref{Sylow}}~~
By Lemma \ref{O}(1), we have that $O_{p}(H)$ is a strongly normal closed subset of $H$.
Then $H//O_{p}(H)$ is a thin hypergroup by Lemma \ref{strong}.
It implies from Theorem \ref{qu} that $H//O_{p}(H)$ is a weakly nilpotent hypergroup.
Hence $H//O_{p}(H)$ is nilpotent group.
By Lemma \ref{O}(2), $O_{p}(H) \subseteq P$ for every Sylow $p$-subset $P$ of $H$.
Then we obtain that $P//O_{p}(H)$ is a Sylow $p$-subgroup of $H//O_{p}(H)$.
By the Sylow Theorem of finite group, we have that $P//O_{p}(H)$ is a normal subgroup of $H//O_{p}(H)$,
that is $P//O_{p}(H)$ is a strongly normal closed subset of $H//O_{p}(H)$.
By Lemma \ref{normal}, $P$ is a strongly normal closed subset of $H$.
\qed
\medskip

\section{Some propositions and questions on weakly nilpotent hypergroups}

In the theory of finite groups, nilpotent groups play a very important role.
There are numerous research results on nilpotent groups (see monographs \cite{GuoI,Rob,H}).
The concept of weakly nilpotent hypergroups mentioned in this paper is actually a generalization of classical nilpotent groups.
In fact, nilpotent groups can be regarded as thin weakly nilpotent hypergroups and also thin weakly nilpotent hypergroups can be regarded as nilpotent groups.
In this section, we also get some characterizations of weakly niloptent hypergroups, which generalize some well-known results in finite nilpotent group theory.
And we also give some open questions about weakly nilpotent hypergroups.

\begin{Proposition}\label{Z}
Let $H$ be a finite hypergroup.
$H//Z(H)$ is a weakly nilpotent hypergroup if and only if $H$ is a weakly nilpotent hypergroup.
\end{Proposition}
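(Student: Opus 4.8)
The plan is to prove the two directions of Proposition~\ref{Z} separately, with the forward direction being essentially a lifting argument for the upper center series and the backward direction a special case of Theorem~\ref{qu}.

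For the easy direction, suppose $H$ is weakly nilpotent. By Lemma~\ref{normal}, $Z(H)$ is a closed (indeed normal) subset of $H$, so the quotient $H//Z(H)$ is defined. Then $H//Z(H)$ is a quotient of a weakly nilpotent hypergroup over a closed subset, so it is weakly nilpotent by Theorem~\ref{qu}. This disposes of one implication in one line.

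For the converse, assume $H//Z(H)$ is weakly nilpotent; write $Z := Z(H)$. The natural idea is to pull back the upper center series of $H//Z$ and prepend $Z$ to obtain a central series of $H$, then invoke Lemma~\ref{central} (or its Corollary) to conclude. More concretely, the upper center series of $H//Z$ has the form $Z//Z = \overline{Z_0} \subseteq \overline{Z_1} \subseteq \cdots \subseteq \overline{Z_m} = H//Z$ with $\overline{Z_{i}}/\overline{Z_{i-1}} = Z((H//Z)//\overline{Z_{i-1}})$. Using \cite[Theorem 3.4.6]{z4} each $\overline{Z_i}$ corresponds to a closed subset $T_i$ of $H$ with $Z \subseteq T_i$ and $T_i//Z = \overline{Z_i}$, and by \cite[Theorem 3.7.2]{z4} one has $(H//Z)//(T_i//Z) \cong H//T_i$ compatibly. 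I would then set $T_{-1} := 1$, $T_0 := Z$, and check that
$$1 = T_{-1} \subseteq T_0 \subseteq T_1 \subseteq \cdots \subseteq T_m = H$$
is a central series of $H$ in the sense of the Definition preceding Lemma~\ref{central}: the step $T_{-1}\subseteq T_0$ is central because $T_0//T_{-1} = Z(H) = Z(H//1) \subseteq Z(H//T_{-1})$, and for $i \geq 1$ the step $T_{i-1}\subseteq T_i$ is central because, transporting through the isomorphism $H//T_{i-1} \cong (H//Z)//(T_{i-1}//Z)$, the inclusion $T_i//T_{i-1} \subseteq Z(H//T_{i-1})$ is exactly the statement $\overline{Z_i}/\overline{Z_{i-1}} \subseteq Z((H//Z)//\overline{Z_{i-1}})$ from the upper center series of $H//Z$. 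Once this central series is in hand, the Corollary after Lemma~\ref{central} immediately gives that $H$ is weakly nilpotent.

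The main obstacle I anticipate is purely bookkeeping: making the correspondence between closed subsets of $H$ containing $Z$ and closed subsets of $H//Z$ precise, and then checking that the ``center of a quotient'' condition transports correctly across the iterated-quotient isomorphism $(H//Z)//(T_{i-1}//Z) \cong H//T_{i-1}$. All the needed tools are already available — \cite[Theorem 3.4.6]{z4} and \cite[Theorem 3.7.2]{z4} for the quotient correspondence and the third isomorphism theorem for hypergroups, Lemma~\ref{normal} for closedness of $Z(H)$, and Lemma~\ref{central} together with its Corollary for the final conclusion — so the argument should reduce to verifying that these diagrams commute; no genuinely new idea is required beyond the central-series machinery already developed for Theorems~\ref{sub} and~\ref{qu}.
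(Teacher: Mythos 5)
Your proposal is correct, but the converse direction takes a genuinely different route from the paper's. The paper argues by contradiction-free chain growth: it invokes Lemma~\ref{non} to see that $Z(H//Z(H))\neq 1$ whenever $H//Z(H)$ is nontrivial and weakly nilpotent, deduces $Z(H)\subsetneqq Z_2(H)$, and then iterates (each $H//Z_m(H)$ being a quotient of $H//Z(H)$, hence weakly nilpotent by Theorem~\ref{qu}) to get a strictly ascending upper center series $1\subsetneqq Z(H)\subsetneqq Z_2(H)\subsetneqq\cdots$, which must terminate at $H$ because $H$ is finite. You instead pull the upper center series of $H//Z(H)$ back through the closed-subset correspondence, prepend $1\subseteq Z(H)$, verify centrality of each step via the third isomorphism theorem, and finish with the Corollary to Lemma~\ref{central}. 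Both arguments are sound and use only machinery already in the paper. The trade-off: the paper's proof is shorter and avoids the bookkeeping of transporting ``center of a quotient'' across iterated-quotient identifications, but it genuinely uses finiteness of $H$ to terminate the chain; your construction produces an explicit finite central series of length $m+1$ (where $m$ is the nilpotency length of $H//Z(H)$) and therefore does not need $H$ to be finite at all, so it proves a slightly stronger statement and additionally bounds the class of $H$ by that of $H//Z(H)$ plus one.
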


\begin{proof}
Firstly, the sufficiency is clear by Theorem \ref{qu}.
Next we will show the necessity.
Assume that $Z(H) = 1$.
It holds on.
Assume that $Z(H) \neq 1$.
If $H//Z(H) = 1$, then $H = Z(H)$ and so $H$ is a weakly nilpotent hypergroup.
Suppose that $H//Z(H) \neq 1$.
Since $H//Z(H)$ is a weakly nilpotent hypergroup, $Z_{2}(H)//Z(H) = Z(H//Z(H)) \neq 1$ by Lemma \ref{non}.
Hence $1 \subsetneqq Z(H) \subsetneqq Z_{2}(H)$.
Similarly, we have that
$$1 \subsetneqq Z(H) \subsetneqq Z_{2}(H) \subsetneqq \cdots \subsetneqq Z_{m}(H) \subsetneqq \cdots.$$
Since $H$ is finite, there exists a positive integer $n$ such that $H = Z_{n}(H)$.
Therefore $H$ is a weakly nilpotent hypergroup.
\end{proof}

\begin{Proposition}\label{H}
Let $H$ be a finite hypergroup.
$H//Z_{\infty}(H)$ is a weakly nilpotent hypergroup if and only if $H$ is a weakly nilpotent hypergroup.
\end{Proposition}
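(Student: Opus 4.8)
The plan is to follow the template of the proof of Proposition \ref{Z}, replacing $Z(H)$ by $Z_\infty(H)$ and using finiteness of $H$ to control the upper center series. First I would note that since $H$ is finite, the ascending chain of closed subsets $1 = Z_0(H) \subseteq Z_1(H) \subseteq \cdots$ must stabilize, so that $Z_\infty(H) = Z_N(H)$ for some non-negative integer $N$ with $Z_N(H) = Z_{N+1}(H) = \cdots$; in particular $Z_\infty(H)$ is a normal closed subset of $H$ by Lemma \ref{normal}, so the quotient $H//Z_\infty(H)$ is a well-defined hypergroup.

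For the sufficiency direction ($H$ weakly nilpotent $\Rightarrow H//Z_\infty(H)$ weakly nilpotent) I would argue that $Z_n(H) = H$ forces the series to stabilize at $H$, hence $Z_\infty(H) = H$ and $H//Z_\infty(H) = 1$ is trivially weakly nilpotent; alternatively one may simply invoke Theorem \ref{qu}, since $Z_\infty(H)$ is a closed subset of $H$.

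The necessity is the substantive direction, and the key point I would isolate is that $Z(H//Z_\infty(H))$ is always trivial: writing $Z_\infty(H) = Z_N(H)$ with $Z_{N+1}(H) = Z_N(H)$, the defining property of the upper center series (Definition \ref{hyper}) gives
$$Z(H//Z_\infty(H)) = Z(H//Z_N(H)) = Z_{N+1}(H)//Z_N(H) = Z_N(H)//Z_N(H) = 1.$$
Granting this, assume $H//Z_\infty(H)$ is weakly nilpotent. If $H//Z_\infty(H) = 1$, then $Z_\infty(H) = Z_N(H) = H$, so $H$ is weakly nilpotent by definition. If instead $H//Z_\infty(H) \neq 1$, then by Lemma \ref{non} its centre $Z(H//Z_\infty(H))$ is non-trivial, contradicting the displayed identity; so this case cannot occur. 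Either way $H$ is weakly nilpotent.

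I expect the only nonroutine step to be the careful bookkeeping behind the displayed identity — namely that stabilization of the upper center series at stage $N$ literally says $Z_{N+1}(H)//Z_N(H)$ is the trivial quotient, which by Definition \ref{hyper} equals $Z(H//Z_N(H))$. Once that is pinned down, the proposition reduces to the two-line case analysis above together with Lemma \ref{non} and Theorem \ref{qu}.
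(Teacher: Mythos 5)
Your proposal is correct and follows essentially the same route as the paper: sufficiency via Theorem \ref{qu}, and necessity by writing $Z_{\infty}(H)=Z_N(H)$ with the series stabilized, observing that $Z(H//Z_N(H))=Z_{N+1}(H)//Z_N(H)$ is trivial, and invoking Lemma \ref{non} to rule out $H//Z_{\infty}(H)\neq 1$. The only difference is presentational — you isolate the identity $Z(H//Z_{\infty}(H))=1$ up front, while the paper derives the same contradiction inline.
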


\begin{proof}
Firstly, the sufficiency is clear by Theorem \ref{qu}.
Next we will show the necessity.
From the definition of the hypercenter of $H$, there exists a integer $n$ such that $Z_{\infty}(H) = Z_{n}(H)$
and $Z_{n}(H) = Z_{n+1}(H) = \cdots$.
If $H//Z_{\infty}(H) \neq 1$, then $Z(H//Z_{n}(H)) \neq 1$.
Hence $Z_{n+1}(H)//Z_{n}(H) \neq 1$ and so $Z_{n+1}(H)\neq Z_{n}(H)$, a contradiction.
Therefore $H = Z_{\infty}(H) = Z_{n}(H)$.
It implies that $H$ is a weakly nilpotent hypergroup.
\end{proof}

\begin{Remark}\label{z}

When $H$ is a group,
Propositions \ref{Z} and \ref{H} generalized some well-known results in the finite group theory.

\end{Remark}

\begin{Corollary}
(1)Let $H$ be a finite group.
$H//Z(H)$ is a nilpotent group if and only if $H$ is a nilpotent group.

(2) Let $H$ be a finite group.
$H//Z_{\infty}(H)$ is a nilpotent group if and only if $H$ is a nilpotent group.

\end{Corollary}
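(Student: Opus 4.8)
The plan is to obtain both equivalences as immediate specializations of Propositions \ref{Z} and \ref{H} to the thin case. The bridge is the observation recorded after Definition \ref{nilpotent}: a hypergroup is a thin weakly nilpotent hypergroup precisely when it is a nilpotent group. Hence for a finite group $H$, regarded as a thin hypergroup, the statements ``$H$ is a nilpotent group'' and ``$H$ is a weakly nilpotent hypergroup'' coincide, and likewise for any thin quotient of $H$.

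For part (1), first I would note that the group-theoretic centre of $H$ agrees with the hypergroup centre $Z(H)$ (for a group the requirement $h^*h = 1$ in the definition of $Z(H)$ is automatic), and that $Z(H)$ is a normal subgroup of $H$, so that the quotient hypergroup $H//Z(H)$ is exactly the ordinary quotient group $H/Z(H)$, in particular thin. Then the argument is a short chain of equivalences: if $H/Z(H)$ is a nilpotent group it is a weakly nilpotent hypergroup, so by Proposition \ref{Z} $H$ is a weakly nilpotent hypergroup, and being thin $H$ is a nilpotent group; the converse runs the same equivalences backwards. Part (2) is identical, with $Z_{\infty}(H)$ — a normal subgroup of $H$ which, since $H$ is finite, is attained after finitely many steps — in place of $Z(H)$, and Proposition \ref{H} in place of Proposition \ref{Z}.

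The only point that needs checking, and it is routine, is that for a normal subgroup $N$ of a group $H$ the quotient hypergroup $H//N$ is thin and coincides with the usual quotient group: this is immediate from the definition $h^{N} := NhN = hN$, since normality collapses each double coset to a single coset, so every element of $H//N$ is thin. I expect no genuine obstacle; the whole content of the Corollary is carried by Propositions \ref{Z} and \ref{H}, the statement being simply their translation back into the language of finite groups, as already anticipated in Remark \ref{z}.
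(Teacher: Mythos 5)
Your proposal is correct and follows exactly the route the paper intends: the Corollary is stated as an immediate specialization of Propositions \ref{Z} and \ref{H} to thin hypergroups (the paper gives no separate proof, only Remark \ref{z}), and your verification that $H//N$ is the ordinary quotient group and that the hypergroup centre reduces to the group centre supplies precisely the routine details the paper leaves implicit.
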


From Theorem \ref{subnormal}, we can get the following corollary.

\begin{Corollary}

Let $H$ be a finite nilpotent group.
Then every subgroup of $H$ is subnormal in $H$.

\end{Corollary}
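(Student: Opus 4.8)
The plan is to obtain this corollary as an immediate specialization of Theorem \ref{subnormal}, via the identification of a finite nilpotent group with a thin hypergroup. First I would recall, as noted in the Remark following Definition \ref{nilpotent}, that a nilpotent group is a thin weakly nilpotent hypergroup: regarding the group $H$ as a hypergroup by replacing each product $xy$ with the singleton $\{xy\}$ makes $H$ a finite weakly nilpotent hypergroup, so Theorem \ref{subnormal} applies to it and tells us that every closed subset of $H$ is strongly subnormal in $H$.

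The second step is to set up the dictionary between the group-theoretic and hypergroup-theoretic vocabulary on the thin hypergroup $H$. By Lemma \ref{closed}(1), a subset $A \subseteq H$ is closed exactly when $1 \in A$, $A^{*} \subseteq A$ and $AA \subseteq A$; since $a^{*} = a^{-1}$ in a thin hypergroup, this is precisely the condition for $A$ to be a subgroup of $H$. Hence the closed subsets of the hypergroup $H$ are exactly the subgroups of the group $H$. Moreover, for closed subsets $F \subseteq K$ of $H$, the quotient $K//F$ is again thin (being a quotient of a thin hypergroup, or directly because $O^{\vartheta}(K) = \{1\}$), so by Lemma \ref{strong} normality of $F$ in $K$ is the same as strong normality, and the hypergroup condition ``$Fh \subseteq hF$ for all $h \in K$'' coincides with the ordinary condition $F \trianglelefteq K$; the hypergroup quotient $K//F$ is then just the usual factor group.

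Finally I would apply Theorem \ref{subnormal} to an arbitrary closed subset $E$ of $H$: it is strongly subnormal in $H$, so there is a chain $E = E_{0} \subseteq E_{1} \subseteq \cdots \subseteq E_{n} = H$ of closed subsets with $E_{i-1}$ strongly normal (hence normal) in $E_{i}$ for each $i$. Translating through the dictionary above, this is exactly a chain of subgroups $E = E_{0} \trianglelefteq E_{1} \trianglelefteq \cdots \trianglelefteq E_{n} = H$, which is the definition of $E$ being subnormal in $H$. Since $E$ was an arbitrary subgroup of $H$, every subgroup of $H$ is subnormal in $H$, as claimed.

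The only point that requires attention — and it is routine bookkeeping rather than a genuine obstacle — is the verification in the second step that the hypergroup notions of closed subset, (strong) normality, and quotient specialize correctly to the usual group-theoretic ones when the ambient hypergroup is thin; once that is in place, the corollary is a direct translation of Theorem \ref{subnormal}.
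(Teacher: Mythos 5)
Your proposal is correct and follows exactly the route the paper intends: the paper simply states that the corollary follows ``from Theorem \ref{subnormal}'' by viewing the finite nilpotent group as a thin weakly nilpotent hypergroup, and you have supplied the same deduction together with the routine verification that closed subsets, (strong) normality, and subnormality specialize to the usual group-theoretic notions in the thin case. No discrepancy with the paper's argument.
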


\begin{Remark}

As we known, if every subgroup of a finite group $H$ is subnormal in $H$, then $H$ is nilpotent.
However, we do not know if this conclusion holds for hypergroups.

\end{Remark}

Then we have the following question.

\begin{Question}

Let $H$ be a finite hypergroup.
If any closed subset $F$ of $H$ is subnormal in $H$,
is $H$ a weakly nilpotent hypergroup $?$

\end{Question}

As well known, if every Sylow subgroup of a finite nilpotent group $G$ is normal in $G$, then $G$ is a nilpotent group.
However we do not know if  this conclusion holds for finite weakly nilpotent hypergroups.
Hence we have the following question.

\begin{Question}

Let $H$ be a finite weakly  nilpotent hypergroup.
If any Sylow closed subset $P$ of $H$ is normal in $H$,
is $H$ a nilpotent hypergroup $?$

\end{Question}

\noindent\textbf{Acknowledgements.} The authors would like to thank the referee for
his/her careful reading and many useful comments, which indeed improved the presentation
of this article.

\end{document}